\numberwithin{equation}{section}
\newcommand{\qtq}[1]{\quad\text{#1}\quad}
\theoremstyle{definition}
\newtheorem{definition}{Definition}
\theoremstyle{plain}
\newtheorem{theorem}[definition]{Theorem}
\newtheorem*{theorem*}{Theorem}
\newtheorem{lemma}{Lemma}
\theoremstyle{remark}
\newtheorem*{claim*}{Claim}
\newtheorem{remark}[]{Remark}
\newtheorem*{remark*}{Remark}
\newtheorem*{example*}{Example}
\newcommand{\eps}{\varepsilon}
\DeclareMathOperator{\R}{\mathbb{R}}
\DeclareMathOperator{\C}{\mathbb{C}}
\DeclareMathOperator{\bigO}{\mathcal{O}}
\DeclareMathOperator{\F}{\mathcal{F}}
\DeclareMathOperator{\M}{\mathcal{M}}
\newcommand{\sW}{\mathscr{W}}
\DeclareMathOperator{\Sw}{\mathcal{S}}
\newcommand{\jbrak}[1]{\langle#1\rangle}
\renewcommand{\d}{\mathrm{d}}
\renewcommand{\:}{\colon}
\newcommand{\bbar}{\overline}
\renewcommand{\tilde}{\widetilde}
\renewcommand{\hat}{\widehat}
\renewcommand{\d}{\mathrm{d}}
\newcommand{\dxi}{\, \mathrm{d}\xi}
\newcommand{\dx}{\, \mathrm{d}x}
\newcommand{\dy}{\, \mathrm{d}y}
\newcommand{\ds}{\, \mathrm{d}s}
\newcommand{\dz}{\, \d z}
\newcommand{\cR}{\mathcal{R}}
\newcommand{\cD}{\mathcal{D}}
\newcommand{\defe}{\overset{\mathrm{def}}{=}}
\begin{document}
    
    \title[Modified Scattering for Hartree NLS]{Modified Scattering for the Hartree Nonlinear Schr\"odinger Equation}
    \author[T. Van Hoose]{Tim Van Hoose}
    \address{Department of Mathematics, University of North Carolina, Chapel Hill}
    \email{tvh@unc.edu}
    \begin{abstract}
        We prove sharp $L^\infty$ decay and modified scattering for the Hartree nonlinear Schr\"odinger equation in dimensions $2$ and $3$ using the testing by wavepackets method of Ifrim and Tataru \cite{ifrimGlobalBoundsCubic2014}. We show that the scattering behavior happens at a regularity well below that of earlier results such as \cite{hayashiAsymptoticsLargeTime1998a,katoNewProofLong2010a}.
    \end{abstract}
    \maketitle

    \section{Introduction}
        We will consider the long-time behavior for small solutions to the following Hartree nonlinear Schr\"odinger equation:
        \begin{equation}\label{E:VNLS}
            \begin{cases}
                i\partial_t u + \Delta u = (|\cdot|^{-1} \ast |u|^2) u \\
                u(0, x) = u_0(x) 
            \end{cases}\quad \text{on } \R_t \times \R_x^d.
        \end{equation}
        where $d \in \{2, 3\}$. Our primary goal is to prove a modified scattering result using the testing by wavepackets technique of Ifrim and Tataru \cite{ifrimGlobalBoundsCubic2014,ifrimTestingWavePackets2022a}. As a consequence, we obtain modified scattering well below the `classical' regularity of $H^{\frac{d}{2}+, \frac{d}{2}+}$; our result works for data in the class $H^{0, \frac{d}{2}+}$. Here, the space $H^{\gamma, \nu}(\R^d)$ is a weighted $L^2$-Sobolev space defined by the norm \eqref{E:wgtSobnorm}, and by $\frac{d}{2}+$ we mean $\frac{d}{2}+\eps$ for any $\eps > 0$.
        
        Our main theorem reads as follows: 
        \begin{theorem}\label{T:maintheorem}
            Let $d \in \{2, 3\}$, $0 < \eps \ll 1$, and $\|u_0\|_{H^{0, \beta}(\R^d)} =\eps$. Then there exists a unique global solution $u(t,x)$ belonging to $H^{0, \beta}$ in the sense that $e^{-it\Delta}u \in L_t^\infty H_x^{0, \beta}(\R \times \R^d)$. Furthermore the following additional properties hold for the solution $u$: 
            \begin{enumerate}
                \item Sharp $L^\infty$ decay and energy growth:
                    The solution $u$ satisfies the estimates 
                    \begin{equation} \label{E:sharpdecay}
                        \|u\|_{L^\infty} \lesssim 2\eps |t|^{-\frac{d}{2}}
                    \end{equation}
                    and
                    \begin{equation}\label{E:energygrowth}
                        \|e^{-it\Delta}u\|_{H^{0, \beta}} \lesssim 2\eps\jbrak{t}^{C\eps^{3-\frac{1}{d}}}.
                    \end{equation}
                \item Modified scattering: If $u$ is a solution to \eqref{E:VNLS}, then there exists a profile $\sW \in L^\infty(\R^d)$ so that (in the $L^\infty$ topology)
                \begin{equation}
                    u(t, x) = t^{-\frac{d}{2}}e^{i\frac{|x|^2}{4t}}\sW\left(\frac{x}{2t}\right)e^{-\frac{i}{2}\log(t)(|\cdot|^{-1} \ast |\sW(\frac{x}{2t})|^2)} + \mathcal{O}(t^{-\frac{d}{2}-\eps}) \text{ as } t \to \infty.
                \end{equation}
            \end{enumerate}
        \end{theorem}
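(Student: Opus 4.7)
The strategy is a bootstrap argument organized around the testing-by-wavepackets framework of Ifrim and Tataru. I would propagate two bootstrap assumptions on a maximal time interval: the sharp dispersive bound $\|u(t)\|_{L^\infty} \leq 2\eps |t|^{-d/2}$ and the slow energy growth $\|e^{-it\Delta}u(t)\|_{H^{0,\beta}} \leq 2\eps \jbrak{t}^{C\eps^{3-1/d}}$. Local well-posedness in $H^{0,\beta}$ via a standard contraction argument supplies a short-time solution, and continuity of both bootstrap quantities in $t$ reduces matters to improving the constants to, say, $\tfrac{3}{2}\eps$ (plus an integrable correction) on the bootstrap interval. Pointwise control of $u$ away from the concentration region $x \approx 2tv$ will come from a Klainerman--Sobolev--type inequality driven by the weighted $L^2$ norm, while pointwise control near concentration will be delivered by the wavepacket testing machinery described next.

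Fix a smooth $L^2$-normalized bump $\chi$ and define, for each velocity $v \in \R^d$, a wavepacket $\Psi_v(t,x)$ with envelope $\chi$ rescaled to width $\sqrt{t}$ about $x = 2tv$ and carrying the linear phase $e^{i|x|^2/(4t)}$. Set
\begin{equation*}
    \gamma(t,v) \defe \jbrak{u(t,\cdot),\,\Psi_v(t,\cdot)}_{L^2_x},
\end{equation*}
which at leading order equals $t^{d/2}e^{-it|v|^2}u(t,2tv)$. Since $\Psi_v$ is an approximate solution of the free Schr\"odinger equation, differentiating in $t$ and invoking \eqref{E:VNLS} yields
\begin{equation*}
    i\partial_t \gamma(t,v) = \jbrak{(|\cdot|^{-1}\ast|u|^2)u,\,\Psi_v(t,\cdot)} + E(t,v),
\end{equation*}
with $E$ encoding the wavepacket error. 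Inserting the self-similar ansatz $u(t,x) \approx t^{-d/2} e^{i|x|^2/(4t)} \gamma(t, x/(2t))$ in the main term and performing a stationary-phase reduction (using the scaling $(|\cdot|^{-1}\ast|u|^2)(2tv) \approx c_d\, t^{-1}(|\cdot|^{-1}\ast|\gamma(t,\cdot)|^2)(v)$) produces the asymptotic ODE
\begin{equation*}
    i\partial_t \gamma(t,v) = c_d\, t^{-1}\bigl(|\cdot|^{-1}\ast|\gamma(t,\cdot)|^2\bigr)(v)\,\gamma(t,v) + \text{error},
\end{equation*}
whose leading part preserves $|\gamma(t,v)|$ pointwise and generates precisely the $\log t$-phase correction appearing in part (2).

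Closing the bootstrap reduces to two items. First, showing that the wavepacket error is integrable in $t$ in $L^\infty_v$ improves the dispersive bound and, after reconstructing $u$ from the profile $\gamma$, delivers \eqref{E:sharpdecay}. Second, a Gr\"onwall argument applied to $\tfrac{d}{dt}\|e^{-it\Delta}u\|_{H^{0,\beta}}^2$, after converting weighted $L^2$ norms into Galilean Sobolev norms $\|\jbrak{J}^\beta u\|_{L^2}$ with $J = x + 2it\nabla$ and controlling $J^\beta\bigl[(|\cdot|^{-1}\ast|u|^2)u\bigr]$ through fractional Leibniz together with a Hardy--Littlewood--Sobolev bound on $|\cdot|^{-1}\ast|u|^2$ interpolated between the dispersive decay and $\|u\|_{L^2}$, yields the slow-growth exponent $\eps^{3-1/d}$ of \eqref{E:energygrowth}. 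The main obstacle, and the reason for working at $H^{0,\beta}$ rather than the classical $H^{d/2+,d/2+}$, is the absence of any derivative regularity on $u$: both the nonlocal factor $|\cdot|^{-1}\ast|u|^2$ and the wavepacket error $E(t,v)$ are classically estimated by Sobolev embedding at regularity $d/2+$, and here one must instead interpolate between the $L^\infty$ decay and the weighted $L^2$ control via Hausdorff--Young and HLS to recover the same effective rates without losing factors of $t$; the exponent $3-1/d$ is precisely the cubic exponent shaved by the dimensional interpolation loss $1/d$. Once both bootstraps close, a Cauchy-in-time argument in $L^\infty_v$ for the corrected profile $t \mapsto \exp\bigl(\tfrac{i}{2}\log t\,(|\cdot|^{-1}\ast|\gamma(t,\cdot)|^2)(v)\bigr)\gamma(t,v)$ produces the scattering profile $\sW$, and substituting back into the self-similar ansatz yields the modified scattering statement with the claimed $\bigo(t^{-d/2-\eps})$ error.
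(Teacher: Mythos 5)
Your proposal follows the paper's strategy in all essentials: testing against a $\sqrt{t}$-scale wavepacket to define $\gamma(t,v)$, deriving the approximate ODE $\partial_t\gamma \approx \frac{i}{2t}(|\cdot|^{-1}\ast|\gamma|^2)\gamma$, closing a bootstrap on $\|u\|_{L^\infty}$ and $\||J|^\beta u\|_{L^2}$ via Gr\"onwall plus integrability of the remainder, and producing $\sW$ from a Cauchy-in-time argument on the phase-corrected profile. Two small clarifications: the paper does not split the analysis into regions near and far from concentration (the comparison $|u(t,2tv)-t^{-d/2}e^{i|x|^2/4t}\gamma(t,v)|\lesssim t^{-\beta/2-d/4}\||J|^\beta u\|_{L^2}$ is uniform in $v$, so no separate Klainerman--Sobolev step is needed), and the key mechanism for the inadmissible $L^\infty$ endpoint of Hardy--Littlewood--Sobolev on $|\cdot|^{-1}\ast|u|^2$ is real interpolation into the Lorentz space $L^{2d/(d-1),2}$ between $L^2$ and $L^\infty$, not Hausdorff--Young.
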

        Our main theorem fits within the context of modified scattering for Schr\"odinger equations with long-range nonlinearities. Here, `modified scattering' refers to the fact that the asymptotic behavior appearing in \Cref{T:maintheorem} is not that of linear scattering, due to the presence of the logarithmic correction in the asymptotic expansion of $u(t, x)$. This logarithmic correction comes from the fact that the Hartree nonlinearity is `scattering-critical': heuristically, one can check that we have the estimate 
        \begin{equation*}
            \| |\cdot|^{-1} \ast |e^{it\Delta} \varphi|^2\|_{L^\infty} = \mathcal{O}(t^{-1}), \qtq{for any} \varphi \in \Sw(\R^d).
        \end{equation*}
        This $\mathcal{O}(t^{-1})$ time decay is on the borderline between integrability and nonintegrability at infinity, and is directly responsible for the appearance of the logarithmic phase correction. \par
        Modified scattering for various types of Schr\"odinger equations has been studied in many different works \cite{deiftLongtimeAsymptoticsSolutions2002, hayashiAsymptoticsLargeTime1998a, katoNewProofLong2010a, lindbladScatteringSmallData2006, ifrimGlobalBoundsCubic2014, ifrimTestingWavePackets2022a, ozawaLongRangeScattering1991, ginibreLongRangeScattering1993a}. We will give a brief overview comparing the features of the different methods; for a high-quality exposition in the context of the $1d$ cubic NLS, we refer to the excellent paper \cite{murphyReviewModifiedScattering2021}. \par
        The first method closely related to our work is that of Hayashi and Naumkin \cite{hayashiAsymptoticsLargeTime1998a}. To prove modified scattering, they formulate a bootstrap argument involving a `dispersive' norm and an `energy' norm. The goal is to prove that the dispersive norm remains small (this will establish the $L^\infty$ decay estimate), while the energy norm is allowed to grow a little bit. Once the bootstrap is closed, one can solve an ODE for the `profile' $f(t) = e^{-it\Delta}u(t)$ to derive the claimed asymptotic expansion. \par
        A similar work in the spirit of \cite{hayashiAsymptoticsLargeTime1998a} is that of Kato and Pusateri in \cite{katoNewProofLong2010a}. They also formulate a bootstrap argument involving a dispersive-type norm and an energy-type norm. However, their method of proving estimates takes place mostly in Fourier space, and involves the detailed analysis of various oscillatory integrals using the space-time resonance method of Germain, Masmoudi and Shatah \cite{germainGlobalSolutions3D2009}. In the end, one arrives at a similar ODE to that found in \cite{hayashiAsymptoticsLargeTime1998a}, after which point the argument proceeds along the same lines.\par
        In the paper \cite{lindbladScatteringSmallData2006}, the authors take a slightly different approach to proving modified scattering. They consider the associated function $q(t, x) = e^{i|x|^2/4t}u(t, \frac{x}{2t})$. 
         If we consider the PDE satisfied by $q(t)$, using an integrating factor similar to the previous two methods yields an estimate of the form 
        \begin{equation*}
            \|q(t)\|_{L^\infty} \lesssim \|q(0)\|_{L^\infty} + \int_1^t \frac{1}{2s^2}\|\partial_x^2 q(s)\|_{L^\infty} \ds.
        \end{equation*}
        If $q$ is assumed to have higher regularity, then we can use energy estimates to control the $L^\infty$ norm under the integral sign using interpolation estimates. One can then close a bootstrap estimate and from there conclude the same asymptotic behavior as before. \par

        Finally, we turn to the testing by wavepackets method of Ifrim and Tataru \cite{ifrimGlobalBoundsCubic2014, ifrimTestingWavePackets2022a}. This method proceeds by testing a solution $u(t, x)$ against a Schwartz function localized at the scale $t^{1/2}$ dictated by the Heisenberg uncertainty principle for the Schr\"odinger equation. Specifically, we define 
        \[
            \Psi_v(t, x) =  e^{i\frac{|x|^2}{4t}}\vartheta\left(\frac{x-2tv}{t^{\frac{1}{2}}}\right)
        \]
        and set $\gamma(t, v) =\jbrak{u, \Psi_v}_{L^2}$, where $\vartheta$ is Schwartz with integral $1$. We call $\gamma(t, v)$ a \textit{wavepacket} (with velocity $v$). One can show that $\Psi_v$ is an approximate solution to the linear Schr\"odinger equation, in the sense that $(i\partial_t + \Delta)\Psi_v$ solves the equation up to an error of size $\mathcal{O}(t^{-1})$. The argument proceeds by first proving that \eqref{E:VNLS} is globally well-posed in a suitable sense (\Cref{T:GWP}). We then prove a series of bounds comparing wavepackets to the solution $u(t, x)$ along a particular ray parametrized by velocity vectors $v$ in \Cref{L:gammabounds}. 

        Finally, we derive an approximate ODE satisfied by $\gamma(t, v)$. This is essentially the same ODE that has been derived in all previous works. The goal is then to show that the error in the approximation is integrable in time. Once we've done that, we can use the ODE and the global existence theory to close a bootstrap argument. This bootstrap will prove that $\|u(t, x)\|_{L^\infty}$ decays like $t^{-\frac{d}{2}}$. With the bootstrap closed, we can use a similar argument to \cite{hayashiAsymptoticsLargeTime1998a} and \cite{katoNewProofLong2010a} to write down the explicit asymptotic behavior for the solution $u(t, x)$. 
        One of the main features of this method is that it markedly improves upon the regularity requirements of the data and solution. Indeed, the methods mentioned previously all require the initial datum to lie in (essentially) the weighted Sobolev space $H^{\frac{d}{2}+, \frac{d}{2}+}(\R^d)$, which allows one to use the $L^\infty$ Sobolev embedding estimate. The testing by wavepackets method will allow us to improve this result substantially, allowing us to treat data in the space $H^{0, \frac{d}{2}+}(\R^d)$. 

        For a different perspective on the testing-by-wavepackets method, we also  direct the reader to another result of \cite{cloos2020long}, which applies testing-by-wavepackets in the setting of the Dirac-Maxwell equation, a different (nonlocal) model based on the Klein-Gordon equation rather than the Schr\"odinger equation.
        
        We organize the remainder of the paper as follows: in \Cref{S:notation}, we compile some notation, lemmas and definitions for quantities that will be used in the remainder of the paper. In \Cref{S:GWP}, we will prove the global well-posedness result we claimed earlier and the bounds along the ray used to prove the first part of \Cref{T:maintheorem}. Finally, in \Cref{S:asymptotics}, we will use the results of \Cref{S:GWP} to write down the asymptotic expansion appearing as the second part of \Cref{T:maintheorem}.
        \subsection*{Acknowledgements} We are thankful to Jeremy Marzuola for his guidance and support, for many helpful conversations about the problem, and his careful reading of the paper. We also thank Benjamin Harrop-Griffiths and Sebastian Herr for their review of the draft and their input. T.V.H. was supported by the UNC NSF DMS-2135998 Research Training Grant.
        
    \section{Notation}\label{S:notation}
    In this section, we introduce some notation that will be used throughout the remainder of the paper.\par
    We write $A \lesssim B$ or $B \gtrsim A$ to denote the inequality $A \leq CB$ for some constant $C > 0$, where $C$ may depend on parameters like the dimension or the indices of function spaces. If $A \lesssim B $ and $B \lesssim A$ both hold, then we write $A \sim B$. We will also make use of the standard Landau symbol $\bigO$, as well as the Japanese bracket notation $\jbrak{\cdot} := (1+|\cdot|^2)^{\frac{1}{2}}$. \par
    We use the standard Lorentz spaces $L^{p,q}(\R^d)$, specifically the fact that $|\cdot|^{-1} \in L^{d, \infty}(\R^d)$. These spaces are defined by the (quasi)norm 
    \begin{equation*}
        \|f\|_{L^{p,q}(\R^d)} :=
        \begin{cases}
                   p^{\frac{1}{q}} \left(\displaystyle\int_0^\infty t^q \ m\{x \in \R^d \: |f(x)| \geq \lambda\}^{\frac{q}{p}}\frac{\d \lambda} {\lambda}\right)^{\frac{1}{q}}, q< \infty \\
                   \displaystyle\sup_{\lambda >0} \lambda^p \ m\{x \in \R^d \: |f(x)| \geq \lambda\}, q = \infty
        \end{cases}     
    \end{equation*}
    where $m$ is $d$-dimensional Lebesgue measure.

    % For our local well-posedness result (see \Cref{SS:LocalTheory}), we need the standard (endpoint) Strichartz estimates for the Schr\"odinger equation proven in \cite{taoNonlinearDispersiveEquations2006c} and \cite{keelEndpointStrichartzEstimates1998}.
    
    We will denote the Fourier transform by $\F[f](\xi) = \widehat{f}(\xi)$ with the normalization 
    \begin{equation*}
        \F[f](\xi) := (2\pi)^{-\frac{d}{2}} \int_{\R^d} e^{-ix \cdot\xi} f(x) \dx 
    \end{equation*}
    and inverse 
    \begin{equation}
        \F^{-1}[g](x) = \check{g}(x) := (2\pi)^{-\frac{d}{2}} \int_{\R^d} e^{ix\cdot\xi} g(\xi)\d \xi.
    \end{equation}
    We will define the weighted Sobolev spaces $H^{\gamma, \nu}(\R^d)$ by the norm
    \begin{equation}\label{E:wgtSobnorm}
        \|u\|_{H^{\gamma, \nu}} := \|\jbrak{\nabla}^\gamma u\|_{L^2} + \| |x|^\nu u\|_{L^2}
    \end{equation}
    where as usual $\jbrak{\nabla}^\gamma := \F^{-1} \jbrak{\xi}^\gamma \F$, and we define the standard Sobolev spaces $H^s(\R^d):= H^{s, 0}(\R^d)$ in the notation above. \par
    In the usual way, we will denote the free Schr\"odinger propagator by $e^{it\Delta}$. Direct computation shows that we can decompose this operator as 
    \begin{equation}\label{E:MDFM}
        e^{it\Delta} = \M(t) \cD(t) \F \M(t)
    \end{equation}
    where 
    \begin{equation}
        \M(t)f(x) = e^{i\frac{|x|^2}{4t}} f(x) \qquad \text{and} \qquad \cD(t) = (2it)^{-\frac{d}{2}} f\left(\frac{x}{2t}\right).
    \end{equation}
    By direct computation, we see that 
    \begin{equation}
        \cD(t)^{-1} = (2i)^{d} \cD\left(\frac{1}{t}\right) .
    \end{equation}
    We will also make use of the Galilean operator $J(t):= x + 2it\nabla$. Direct computation shows that
    \begin{equation}\label{E:Jtdefn1}
        J(t) = \M(t) (2it\nabla) \M(-t).
    \end{equation}
    Indeed, we have 
    \begin{align*}
        \M(t)(2it\nabla)\M(-t) f &= e^{i\frac{|x|^2}{4t}}(2it\nabla)e^{-i\frac{|x|^2}{4t}}f \\
        &= e^{i\frac{|x|^2}{4t}}e^{-i\frac{|x|^2}{4t}} (x+2it\nabla)f \\
        &= J(t)f.
    \end{align*}
    An ODE argument furnishes the identity 
    \begin{equation}\label{E:Jtdefn2}
        J(t) = e^{it\Delta}xe^{-it\Delta}.
    \end{equation}
    Indeed, both sides of the equation match at $t=0$. If we take a time derivative on both sides, we find that 
    \begin{align*}
        J'(t) &= 2i\nabla \\
        \frac{d}{dt} [e^{it\Delta} x e^{-it\Delta}] &= i e^{it\Delta} [\Delta, x] e^{-it\Delta}
    \end{align*}
    where $[\cdot, \cdot]$ is the usual operator commutator. One can check directly that $[\Delta, x] = 2\nabla$. Since Fourier multipliers commute and $(e^{it\Delta})_{t \in \R}$ is a semigroup, we see 
    \begin{equation*}
        i e^{it\Delta} [\Delta, x] e^{-it\Delta} = 2i\nabla.
    \end{equation*} 
    Since the time derivatives match for all $t$ and the two expressions have matching values at $t=0$, ODE uniqueness implies that $J(t) = e^{it\Delta}x e^{-it\Delta}$ for all $t$, which was what we claimed. 
    
    \begin{remark*}
        Since $e^{it\Delta}: L^2 \to L^2$ is unitary, we see that $ \| J(t)u\|_{L^2} = \|x e^{-it\Delta}u\|_{L^2}.$
    \end{remark*}
    We also define powers of $J(t)$ in the following manner:
    \begin{align}
        |J|^{\gamma}(t) &:= \M(t)(-4t^2 \Delta)^{\frac{\gamma}{2}}\M(-t) \quad \text{for }\gamma\in[0, \infty) \label{E:Jtpower}\\
        &= e^{it\Delta}|x|^\gamma e^{-it\Delta}. \label{E:Jtpower2}
    \end{align}
    Finally, we will need an abstract interpolation result from \cite{berghInterpolationSpacesIntroduction1976}, which we have specialized to our particular case. This will allow us to directly estimate convolutions against $|x|^{-1}$ in $L^\infty$, which is an inadmissible endpoint for Hardy-Littlewood-Sobolev. 
    
    \begin{lemma}[Real Interpolation of $L^p$ spaces]\label{L:interplemma}
        If $1 \leq p_0 \neq p_1 \leq \infty$, then we have the equality of spaces (with equivalent norms) 
        \begin{equation}
            [L^{p_0}, L^{p_1}]_{\theta, q} = L^{p, q},
        \end{equation}
        where $\frac{1}{p} = \frac{1-\theta}{p_0} + \frac{\theta}{p_1}$ and $0 < \theta < 1$. Further, it holds that 
        \begin{equation}
            \|u\|_{L^{p, q}} \lesssim \|u\|_{L^{p_0}}^{1-\theta} \|u\|_{L^{p_1}}^\theta.
        \end{equation}
    \end{lemma}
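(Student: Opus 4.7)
The plan is to use the $K$-functional characterization of real interpolation spaces. Recall that for $f \in L^{p_0} + L^{p_1}$, one sets
$$K(t, f) := \inf_{f = f_0 + f_1}\bigl(\|f_0\|_{L^{p_0}} + t\|f_1\|_{L^{p_1}}\bigr),$$
so that by definition $\|f\|_{[L^{p_0}, L^{p_1}]_{\theta, q}} = \|t^{-\theta} K(t, f)\|_{L^q(\d t / t)}$. The first identification of spaces is then a matter of computing $K(t, f)$ explicitly in terms of the decreasing rearrangement $f^*$ of $f$, which is the content of Holmstedt's theorem.

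To compute $K(t, f)$, I would split $f$ according to a level set of $|f|$: given $t$, choose a threshold $\lambda(t)$ and take $f_1$ to be the truncation of $f$ at height $\lambda(t)$, with $f_0 = f - f_1$. Optimizing over $\lambda(t)$ yields the equivalent expression
$$K(t, f) \sim \left(\int_0^{s(t)} f^*(r)^{p_0}\dr\right)^{1/p_0} + t\left(\int_{s(t)}^\infty f^*(r)^{p_1}\dr\right)^{1/p_1},$$
where $s(t)$ is a scaling parameter fixed by $p_0, p_1$ (with the obvious convention in the $p_1 = \infty$ case). Substituting into the $L^q(\d t/t)$ norm with weight $t^{-\theta}$, applying Hardy's inequality to absorb the integral averages, and changing variables produces the equivalence
$$\|t^{-\theta} K(t, f)\|_{L^q(\d t/t)} \sim \left(\int_0^\infty (r^{1/p} f^*(r))^q \frac{\dr}{r}\right)^{1/q},$$
which is exactly the Lorentz (quasi)norm $\|f\|_{L^{p, q}}$ for $1/p = (1-\theta)/p_0 + \theta/p_1$.

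For the interpolation inequality, I would use the universal bound $K(t, f) \leq \min(\|f\|_{L^{p_0}},\, t\|f\|_{L^{p_1}})$, which is immediate from the trivial decompositions $f = f + 0$ and $f = 0 + f$. Splitting the $L^q(\d t/t)$ integral at $t_0 := \|f\|_{L^{p_0}}/\|f\|_{L^{p_1}}$ and inserting these two bounds on each piece yields $\|f\|_{[L^{p_0}, L^{p_1}]_{\theta, q}} \lesssim \|f\|_{L^{p_0}}^{1-\theta}\|f\|_{L^{p_1}}^\theta$, which combined with the first part gives the claimed estimate.

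The main obstacle is the Holmstedt computation: establishing the sharp two-sided control of $K(t, f)$ by integrals of $f^*$ requires careful layer-cake estimates and optimization over truncation heights, together with a nontrivial use of the Hardy--Littlewood inequality relating $\int |f_0|^{p_0}$ to $\int_0^{s(t)} (f^*)^{p_0}$. Since this argument is standard and completely worked out in \cite{berghInterpolationSpacesIntroduction1976}, I would ultimately defer to that reference for the full technical details and just quote the result in this form for convenience.
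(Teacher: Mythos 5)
Your proof is correct, and the $K$-functional argument for the interpolation inequality is sound: the trivial decompositions give $K(t, f) \leq \min(\|f\|_{L^{p_0}},\, t\|f\|_{L^{p_1}})$, and splitting the $L^q(\d t/t)$ integral at $t_0 = \|f\|_{L^{p_0}}/\|f\|_{L^{p_1}}$ does produce the multiplicative bound for the $(\theta,q)$ interpolation norm, which transfers to the Lorentz norm by the first part. The paper takes a slightly more abstract route for this second claim: rather than manipulating the $K$-functional directly, it invokes the theory of interpolation spaces of class $\mathscr{C}(\theta)$ and $\mathscr{C}_J(\theta)$ from \cite{berghInterpolationSpacesIntroduction1976} (Theorem 3.5.1 and the remarks on p.\ 49), where the multiplicative estimate follows from the $J$-functional bound $\|a\|_X \lesssim t^{-\theta}J(t,a)$ evaluated at the optimal $t$. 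These are really the same optimization phrased via $K$ versus $J$; your version is more self-contained and elementary, while the paper simply quotes the abstract machinery. Both approaches outsource the Holmstedt-type computation behind the identification $[L^{p_0}, L^{p_1}]_{\theta,q} = L^{p,q}$ to the same reference, so there is no substantive gap in either.
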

    \begin{remark}
        The first part of this lemma is directly from \cite[Theorem 5.3.1]{berghInterpolationSpacesIntroduction1976}. The second half follows from the discussion in \cite[Section 3.5.1f]{berghInterpolationSpacesIntroduction1976}. In our particular instance, this lemma will be applied to the spaces $L^2$ and $L^\infty$. To be completely explicit, we will use that 
        \begin{equation*}
            \|u\|_{L^{\frac{2d}{d-1}, 2}} \lesssim \|u\|_{L^2}^{\frac{d-1}{d}} \|u\|_{L^\infty}^{\frac{1}{d}},
        \end{equation*}
        since $\frac{2d}{d-1} > 2$, and the exponents come directly from \Cref{L:interplemma}. For more explicit details, see \Cref{AS:InterpAppendix}.
    \end{remark}
    \begin{remark}[Convention for convolutions]\label{R:convolution}
        We will frequently work with the operation of convolution with the kernel $|\cdot|^{-1}$. For notational ease, we will write $|\cdot|^{-1} \ast f(t, z)$ to mean the convolution with output variable $z$: 
    \begin{equation*}
        |\cdot|^{-1} \ast f(t,z) := \int_{\R^d} \frac{f(t,y)}{z-y} \dy.
    \end{equation*}
    For example, we will often see convolution with functions of the variable $2tv$. We interpret this in light of the definition above, with the replacement $z \mapsto 2tv$. If the argument is suppressed, we take that to mean that the output is the variable $x$. 
    \end{remark}
    \section{Global Well-Posedness and Estimates along Rays}\label{S:GWP}
        In this section, we will prove the claimed global well-posedness result, along with the estimates along rays that will later prove the first part of \Cref{T:maintheorem}.
        To begin, we prove that \eqref{E:VNLS} is globally well-posed in $H^{0, \beta}$; we will do this by a standard $L^2$ local well-posedness result combined with a persistence of regularity argument.
        \begin{theorem}\label{T:GWP}
            Let $d \in \{2, 3\}$. Then the equation \eqref{E:VNLS} is globally well-posed in $H^{0, \beta}$ for any $\beta > \frac{d}{2}$, in the sense that it has a unique solution $u \in C_t L_x^2(\R \times \R^d)$ with $|J|^\beta u \in C_tL_x^2(\R \times \R^d)$. Any such solution $u(t, x)$ is in $C_t L_x^\infty$, and near $t = 0$ we have 
            \begin{equation}
                \|u(t, x)\|_{L_x^\infty} \lesssim t^{-\frac{d}{2}} \|u_0\|_{H^{0, \beta}}.
            \end{equation}
        \end{theorem}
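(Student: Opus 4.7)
The plan is a standard three-stage argument: local well-posedness in the weighted space $H^{0,\beta}$ via a contraction on Duhamel's formula, globalization using mass conservation together with persistence of the $|J|^\beta$ norm, and finally the dispersive bound. For the local theory I would iterate in $C_tL_x^2 \cap L_t^qL_x^r$ for a suitable admissible pair $(q,r)$, with the only nontrivial input being the nonlinear estimate. Since $|\cdot|^{-1}\in L^{d,\infty}$, the convolution $V(u):=|\cdot|^{-1}*|u|^2$ can be estimated in Lorentz-space norms via Hardy--Littlewood--Sobolev, after which H\"older suffices in dimensions $d=2,3$ to close the iteration; the resulting time-powers all carry positive exponents because the problem is subcritical in $L^2$. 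To upgrade this iteration from $L^2$ to $H^{0,\beta}$, I would use that $|J|^\beta(t) = e^{it\Delta}|x|^\beta e^{-it\Delta}$ commutes with the linear flow, so the Strichartz estimates apply verbatim to $|J|^\beta u$ and the same contraction extends.

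Mass conservation follows immediately from the gauge-invariance of the real potential $V(u)$ and rules out $L^2$ blow-up, giving the solution globally in $C_tL_x^2$. The principal task, and the step I expect to be the main obstacle, is persistence of the $|J|^\beta$ norm. Commuting $|J|^\beta$ through the equation yields $(i\partial_t+\Delta)|J|^\beta u = |J|^\beta[V(u)u]$, so the standard energy identity reduces the matter to an $L^2$ estimate on $|J|^\beta[V(u)u]$ followed by Gr\"onwall. The difficulty is that $\beta$ is non-integer, so $|J|^\beta$ does not obey a classical product rule. I would handle this by invoking the factorization $|J|^\beta = \M(t)(2t)^\beta|\nabla|^\beta \M(-t)$ from \eqref{E:Jtpower} together with the gauge identity $|\cdot|^{-1}*|u|^2 = |\cdot|^{-1}*|\M(-t)u|^2$, which reduces the bound to a Kato--Ponce-type fractional Leibniz estimate for $|\nabla|^\beta$ applied to the (modulation-equivalent) Hartree product. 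The HLS bounds already used in the local theory then close Gr\"onwall with at worst polynomial-in-time growth, so the solution extends globally in $H^{0,\beta}$.

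For the $L^\infty$ bound near $t=0$ I would combine two ingredients. First, the dispersive estimate $\|e^{it\Delta}v\|_{L^\infty}\lesssim |t|^{-d/2}\|v\|_{L^1}$, which is immediate from the factorization \eqref{E:MDFM} since $\|\cD(t)\F\M(t)v\|_{L^\infty}\leq |2t|^{-d/2}\|\M(t)v\|_{L^1}=|2t|^{-d/2}\|v\|_{L^1}$. Second, the weighted Cauchy--Schwarz embedding $H^{0,\beta}\hookrightarrow L^1$ for $\beta>\tfrac{d}{2}$, obtained by pairing $\jbrak{x}^{-\beta}\in L^2$ against $\jbrak{x}^\beta u_0\in L^2$. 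Together they yield $\|e^{it\Delta}u_0\|_{L^\infty}\lesssim |t|^{-d/2}\|u_0\|_{H^{0,\beta}}$. The Duhamel contribution is of lower order near $t=0$ and is controlled by the same dispersive estimate combined with the HLS bounds on $V(u)u$ from the local theory, while continuity of $t\mapsto u(t)$ into $L^\infty$ away from zero follows from continuity into $H^{0,\beta}$.
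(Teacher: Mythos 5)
Your local theory, mass-conservation globalization, and persistence argument all match the paper's proof in essence: a Duhamel contraction in $C_tL_x^2$ intersected with a Strichartz space, Hardy--Littlewood--Sobolev for the Hartree potential, and the gauge identity $\M(-t)|J|^\beta u = (2it)^\beta|\nabla|^\beta(\M(-t)u)$ to reduce the $|J|^\beta$ persistence to a Kato--Ponce estimate on $|\nabla|^\beta$ applied to $w = \M(-t)u$ (the paper also uses that $|w|^2=|u|^2$ so the potential is unchanged). That part of the proposal is sound and is the same route the paper takes.

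The step with a genuine gap is the $L^\infty$ bound. You propose splitting into the linear evolution plus Duhamel and estimating each with the $L^1\to L^\infty$ dispersive bound. The linear piece is fine: $H^{0,\beta}\hookrightarrow L^1$ for $\beta>\tfrac{d}{2}$ via Cauchy--Schwarz and $\|e^{it\Delta}u_0\|_{L^\infty}\lesssim |t|^{-d/2}\|u_0\|_{L^1}$. But the claim that the Duhamel term is ``controlled by the same dispersive estimate'' fails: you would be integrating
\begin{equation*}
\int_0^t |t-s|^{-\frac{d}{2}}\,\|(|\cdot|^{-1}*|u|^2)u(s)\|_{L^1}\,\ds,
\end{equation*}
and the kernel $|t-s|^{-d/2}$ is not integrable at $s=t$ for $d=2$ (logarithmic divergence) or $d=3$ (power divergence). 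Simply noting the nonlinear term is ``lower order near $t=0$'' does not resolve this; the singularity is at $s=t$, not at $s=0$. You would need a separate argument near the upper endpoint (e.g.\ interpolating the dispersive estimate with $L^2$ and controlling $V(u)u$ in a higher $L^p$, or a Strichartz-to-$L^\infty$ argument), which the proposal does not supply. The paper avoids the issue entirely: once $\||J|^\beta u(t)\|_{L^2}$ is controlled by the local theory, it applies Gagliardo--Nirenberg to $w=\M(-t)u$, giving $\|u\|_{L^\infty}=\|w\|_{L^\infty}\lesssim \|w\|_{L^2}^{1-\frac{d}{2\beta}}\||\nabla|^\beta w\|_{L^2}^{\frac{d}{2\beta}}$, and then $\||\nabla|^\beta w\|_{L^2}=(2t)^{-\beta}\||J|^\beta u\|_{L^2}$ produces exactly the factor $t^{-d/2}$. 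This is a pointwise-in-time interpolation estimate rather than a Duhamel estimate, and it is the cleaner way to obtain the desired bound. You should replace your dispersive-plus-Duhamel argument with this one.
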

        \begin{proof}
            To prove the $L^2$ well-posedness, we rephrase the problem using the Duhamel formula, leading to the equation 
            \begin{equation}
                \Phi[u] := u(t,x) = e^{it\Delta}u_0 -i \int_0^t e^{i(t-s)\Delta}\{(|x|^{-1} \ast |u|^2)u(s)\} \ds.
            \end{equation}
            We have to treat the cases $d = \{2, 3\}$ with different Strichartz norms, but the arguments are entirely identical. To this end, define 
            \begin{equation*}
                A_2 = L_{T,x}^4(\R^2) \qtq{and} A_3 = L_T^5 L_x^{\frac{30}{11}}(\R^3).
            \end{equation*}
            We will run a contraction mapping argument in the space 
            \begin{equation}
                X_{T, d} = \left\{u\: [0, T] \times \R^d \to \C \mid \|u\|_{L_T^\infty L_x^2} \lesssim 2\|u_0\|_{L^2} \text{ and } \|u\|_{A_d}\lesssim 2C\|u_0\|_{L^2}\right\}
            \end{equation}
            endowed with the metric $d(u, v) = \|u-v\|_{L_T^\infty L_x^2}$, and where $C$ encodes all the arbitrary constants that come from Strichartz estimates and Hardy-Littlewood-Sobolev. For the sake of brevity, we present only the key nonlinear estimates here:
            \begin{align*}
                \left\| \int_0^t e^{i(t-s)\Delta}(|x|^{-1} \ast |u|^2)u(s)\ds\right\|_{L_T^\infty L_x^2(\R^2)} &\lesssim \| (|\cdot|^{-1} \ast |u|^2)u(s)\|_{L_{T, x}^{\frac{4}{3}}} \\
                &\lesssim \|u\|_{L_T^\infty L_x^2} \| |\cdot|^{-1} \ast |u|^2\|_{L_T^{\frac{4}{3}}L_x^4} \\
                &\lesssim \|u\|_{L_T^\infty L_x^2} \||u|^2\|_{L_{T, x}^{\frac{4}{3}}} \\
                &\lesssim T^{\frac{1}{2}}\|u\|_{L_T^\infty L_x^2}^2 \|u\|_{A_2},
            \end{align*}
            which is acceptable. The estimate of the $A_2$ component of the norm is entirely identical, by Strichartz. In three spatial dimensions, we have the nonlinear estimate
            \begin{align*}
                \left\| \int_0^t e^{i(t-s)\Delta} (|x|^{-1} \ast |u|^2)u(s)\ds\right\|_{L_t^\infty L_x^2(\R^3)} &\lesssim \| (|x|^{-1} \ast |u|^2)u\|_{L_{T, x}^{\frac{10}{7}}}\\
                &\lesssim \|u\|_{L_T^\infty L_x^2}\| |x|^{-1} \ast |u|^2\|_{L_T^{\frac{10}{7}}L_x^5} \\
                &\lesssim \|u\|_{L_T^\infty L_x^2}\||u|^2\|_{L_T^{\frac{10}{7} }L_x^{\frac{15}{13}}}\\
                &\lesssim T^{\frac{1}{2}}\|u\|_{L_T^\infty L_x^2}^2 \|u\|_{A_3},
            \end{align*}
            where to go from the second to the third line we applied the Hardy-Littlewood-Sobolev inequality. Using the definition of the space $X_{T, d}$, we see that for $T$ small enough depending only on $\|u_0\|_{L^2}$, we have a map from $X_{T, d} \to X_{T, d}$ defined by the right-hand side of the Duhamel formula above. To prove that $\Phi$ is a contraction map, we consider the difference $\|\Phi[u]-\Phi[v]\|_{L_T^\infty L_x^2}$. To estimate this, we see that once again the crucial term is the integral term of the Duhamel formula. To estimate it, we rewrite the difference of convolutions as 
            \begin{multline}
                (|\cdot|^{-1} \ast |u|^2)u - (|\cdot|^{-1} \ast |v|^2)v = (|\cdot|^{-1} \ast |u|^2)(u-v) \\+ (|\cdot|^{-1} \ast (u-v)\bbar{u})v+ (|\cdot|^{-1} \ast (\bbar{u}- \bbar{v})v)v.
            \end{multline}
            Checking that $\Phi$ is a contraction mapping for $T$ sufficiently small proceeds in an entirely analogous way to the estimate above. Hence there is a unique $L^2$ solution to \eqref{E:VNLS}; by conservation of mass, it is global. \par
            We would now like to upgrade to $H^{0, \beta}$ initial data; this will follow from a standard persistence of regularity argument. To do this, we note that the operator $|J|^\beta$ defined above commutes nicely with the linear part of the equation. By Strichartz, the linear term is easily controlled in terms of $\|u_0\|_{H^{0, \beta}}$; it remains to handle the nonlinear term, which takes the form 
            \begin{equation}\label{E:persistenceintegral}
                \M(t) (2it)^\beta |\nabla|^\beta \M(-t) \int_0^t e^{i(t-s)\Delta} (|\cdot|^{-1} \ast |u|^2)u \ds.
            \end{equation}
            Estimating this in $L_T^\infty L_x^2$ and applying Strichartz, we see that it suffices to provide good estimates for 
            \begin{equation}
               \| |\nabla|^\beta [(|\cdot|^{-1} \ast |w|^2)w]\|_{L_{T, x}^{\frac{4}{3}}} \qtq{and} \| |\nabla|^{\beta} [(|\cdot|^{-1} \ast |w|^2)w]\|_{L_{T, x}^{\frac{10}{7}}},
            \end{equation}
            where we write $w = \M(-t)u$. By the fractional chain rule, we can control the terms above by
            \begin{align*}
                \| |\nabla|^\beta [(|\cdot|^{-1} \ast |w|^2)w]\|_{L_{T, x}^{\frac{4}{3}}} &\lesssim
                \begin{multlined}[t]
                    \|w\|_{L_T^\infty L_x^2} \| |\cdot|^{-1} \ast |\nabla|^\beta |w|^2\|_{L_T^\frac{4}{3}L_x^4} \\
                    + \||\nabla|^\beta w \|_{L_T^\infty L_x^2} \| |\cdot|^{-1} \ast |w|^2\|_{L_T^{\frac{4}{3}}L_x^4}
                \end{multlined}\\
                &\lesssim T^{\frac{1}{2}} \|u\|_{L_T^\infty L_x^2} \|u\|_{A_2} \| |\nabla|^\beta w\|_{L_T^\infty L_x^2}
            \end{align*}
            in two spatial dimensions, and in three spatial dimensions, we have 
            \begin{align*}
                \| |\nabla|^{\beta} [(|\cdot|^{-1} \ast |w|^2)w]\|_{L_{T, x}^{\frac{10}{7}}} &\lesssim \begin{multlined}[t]
                    \|u\|_{L_T^\infty L_x^2} \| |\cdot|^{-1} \ast |\nabla|^\beta |u|^2\|_{L_T^{\frac{10}{7}} L_x^5} \\
                    +\| |\cdot|^{-1} \ast |u|^2 \|_{L_T^{\frac{10}{7}}L_x^5}\| |\nabla|^{\beta}u\|_{L_T^\infty L_x^2}.
                \end{multlined}\\
                &\lesssim T^{\frac{1}{2}}\|u\|_{L_T^\infty L_x^2} \|u\|_{A_3}\||\nabla|^{\beta}w\|_{L_T^\infty L_x^2}.
            \end{align*}
            Reinserting this into \eqref{E:persistenceintegral} and recalling the definition of $|J|^\beta$, we see that 
            \begin{equation}
                \| |J|^{\beta}u\|_{L_T^\infty L_x^2} \lesssim \|u_0\|_{H^{0, \beta}} + \frac{1}{2}\||J|^\beta u\|_{L_T^\infty L_x^2},
            \end{equation}
            where by the contraction mapping argument from earlier, we can choose $T$ depending only on $\|u_0\|_{L^2}$ to make the constant in front of $|J|^\beta u$ on the right-hand side equal to $\frac{1}{2}$. This implies that on the interval $[0, T]$ the norm of $|J|^\beta u$ grows by no more than a factor of $2$ in terms of the initial data. Again invoking mass conservation, we see that $\||J|^\beta u\|_{L^2}$ is finite along the global flow. \par
            Finally, we would like to obtain $L^\infty$ bounds for the solution. This follows from a virtually identical argument to \cite{ifrimGlobalBoundsCubic2014}; we reproduce it in our case for posterity. First, note (with $w$ defined the same way as earlier) that we have the identity
            \begin{equation}\label{E:jbetaidentity}
                \M(-t) |J|^\beta u = (2it)^\beta |\nabla|^\beta w.
            \end{equation}
            In particular, we see immediately that $w \in H^\beta$, and thus in $L^\infty$; to conclude, we use Gagliardo-Nirenberg:
            \begin{equation}
                \|u(t)\|_{L^\infty} = \|w(t)\|_{L^\infty} \lesssim \|w(t)\|_{L^2}^{1-\frac{d}{2\beta}}\||\nabla|^\beta w\|_{L^2}^{\frac{d}{2\beta}}. 
            \end{equation}
            The right-hand side is easily seen to be bounded by $t^{-\frac{d}{2}}\|u_0\|_{H^{0, \beta}}$ by rearranging \eqref{E:jbetaidentity} and using the properties of the solution $u$.        
        \end{proof}
        Motivated by the fact that linear Schr\"odinger waves propagate with velocity $v = \frac{x}{2t}$, given a velocity vector $v \in \R^d$, we define a ray 
        \begin{equation}
            \Gamma_{v} := \{x = 2t v\}.
        \end{equation}
        Following \cite{ifrimGlobalBoundsCubic2014}, we then make the following
        \begin{definition}
            Let $\vartheta \in \Sw(\R^d)$ have total integral $1$. Then a wave packet with velocity $v$ is defined to be 
            \begin{equation*}
                \Psi_{v}(t, x) := \vartheta\left(\frac{x -2tv}{t^{\frac{1}{2}}}\right)e^{i\frac{|x|^2}{4t}}.
            \end{equation*}
        \end{definition}
        The reason for the rescaling $t^{-\frac{1}{2}}$ of the argument of $\vartheta$ is so that the wavepacket lives at the scaling dictated by the Heisenberg uncertainty principle. \par
        In fact, for any $v \in \R^d$, $\Psi_v(t,x)$ is an approximate solution to the linear Schr\"odinger equation in the following sense:
        \begin{lemma}
            In $d$ dimensions, we have for any $v \in \R^d$
            \begin{equation}\label{E:approxsoln}
                (i\partial_t + \Delta)\Psi_v(t, x) = \frac{1}{2t}e^{i\frac{|x|^2}{4t}} \nabla \cdot \bigg\{
                \begin{multlined}[t]
                    i(x-2vt)\vartheta\left(\frac{x-2vt}{t^\frac{1}{2}}\right) \\+ 2t^{\frac{1}{2}}\nabla\vartheta\left(\frac{x-2vt}{t^\frac{1}{2}}\right)\bigg\}
                \end{multlined}
            \end{equation}
        \end{lemma}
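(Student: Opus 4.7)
The plan is to reduce the claim to a bookkeeping exercise by factoring out the modulation $\M(t)$, applying a clean conjugation identity, and then verifying that the resulting expression matches what one gets from expanding the divergence on the right-hand side. Specifically, write $\Psi_v = \M(t)\phi$ where $\phi(t,x) = \vartheta(y)$ and $y := (x-2tv)/t^{1/2}$, so that the problem reduces to computing $(i\partial_t+\Delta)[\M(t)\phi]$ with $\phi$ having this explicit translating-and-rescaling structure.

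First I would establish the conjugation identity
\begin{equation*}
    (i\partial_t + \Delta)\M(t)\phi = \M(t)\left[i\partial_t\phi + \Delta\phi + \tfrac{ix}{t}\cdot\nabla\phi + \tfrac{id}{2t}\phi\right],
\end{equation*}
which follows by directly differentiating $\M(t) = e^{i|x|^2/4t}$ in $t$ and $x$ and noting that the two $|x|^2/4t^2$ contributions from $i\partial_t$ and from the square of the gradient of the phase cancel, while the cross-term gives the $\tfrac{ix}{t}\cdot\nabla\phi$ and the divergence of the gradient of the phase gives $\tfrac{id}{2t}$. This is a short product-rule computation.

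Next I would apply the chain rule to $\phi=\vartheta(y)$. The only mildly delicate point is $\partial_t y = -2vt^{-1/2} - y/(2t)$, which has two pieces coming from the moving center $2tv$ and the rescaling $t^{1/2}$. Substituting $x = t^{1/2}y + 2tv$ into $\tfrac{ix}{t}\cdot\nabla\phi = \tfrac{ix}{t^{3/2}}\cdot(\nabla\vartheta)(y)$ produces a $\tfrac{2iv}{t^{1/2}}\cdot(\nabla\vartheta)(y)$ term that exactly cancels the corresponding $-\tfrac{2iv}{t^{1/2}}\cdot(\nabla\vartheta)(y)$ coming from $i\partial_t\phi$, while the $y$-pieces combine as $-\tfrac{iy}{2t}+\tfrac{iy}{t}=\tfrac{iy}{2t}$. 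The upshot is
\begin{equation*}
    (i\partial_t+\Delta)\Psi_v = \frac{\M(t)}{2t}\bigl[\, id\,\vartheta(y) + iy\cdot(\nabla\vartheta)(y) + 2(\Delta\vartheta)(y)\,\bigr].
\end{equation*}

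Finally, to see that this matches the claimed form, I would expand the divergence on the right-hand side of \eqref{E:approxsoln}: using $\nabla_x[\vartheta(y)] = t^{-1/2}(\nabla\vartheta)(y)$ and $\nabla_x\cdot[(\nabla\vartheta)(y)] = t^{-1/2}(\Delta\vartheta)(y)$, together with $x-2vt = t^{1/2}y$, one obtains $\nabla\cdot\{i(x-2vt)\vartheta(y)\} = id\,\vartheta(y) + iy\cdot(\nabla\vartheta)(y)$ and $\nabla\cdot\{2t^{1/2}(\nabla\vartheta)(y)\} = 2(\Delta\vartheta)(y)$, which assembles into precisely the bracket above. The main obstacle is purely mechanical, namely keeping the $t^{1/2}$-scalings and the two sources of $t$-dependence in $\phi$ straight; there is no conceptual difficulty, and the key structural point is simply that the $O(t^{-1/2})$ terms produced by the translating center cancel, leaving only $O(t^{-1})$ pieces, which is what makes $\Psi_v$ an approximate solution on the time scale relevant to the wavepacket analysis.
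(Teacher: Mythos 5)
Your proof is correct, and while it is ultimately the same direct product-rule/chain-rule computation as the paper's, you organize it differently: you first factor $\Psi_v = \M(t)\vartheta(y)$ and prove the clean conjugation identity
\[
(i\partial_t+\Delta)\M(t)\phi = \M(t)\Big[i\partial_t\phi + \Delta\phi + \tfrac{ix}{t}\cdot\nabla\phi + \tfrac{id}{2t}\phi\Big],
\]
whereas the paper expands $(i\partial_t+\Delta)\Psi_v$ all at once and simplifies afterward. Your organization makes the cancellation of the $|x|^2/4t^2$ terms and of the $O(t^{-1/2})$ translating-center contributions visible structurally rather than by inspection of a long display, which is a modest but genuine gain in transparency, and the identity you isolate is reusable elsewhere (it is essentially the conjugation form of the Galilean symmetry already used in \eqref{E:Jtdefn1}). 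You also go one step further than the paper: you explicitly expand the divergence on the right-hand side of \eqref{E:approxsoln} and confirm it assembles to $\tfrac{1}{2t}\M(t)\bigl[id\,\vartheta(y)+iy\cdot(\nabla\vartheta)(y)+2(\Delta\vartheta)(y)\bigr]$, whereas the paper stops after simplifying the left-hand side to that same expression without visibly matching it against the stated divergence form. Everything checks: $\partial_t y = -2vt^{-1/2}-y/(2t)$, the $\pm\tfrac{2iv}{t^{1/2}}\cdot(\nabla\vartheta)(y)$ cancel, the $y$-terms give $\tfrac{iy}{2t}\cdot(\nabla\vartheta)(y)$, and $\tfrac{iy}{2t}=\tfrac{i(x-2vt)}{2t^{3/2}}$ recovers the paper's final display.
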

        \begin{proof}
            This follows by a direct (albeit involved) computation. To wit, we have 
            \begin{equation*}
                i\partial_t \Psi_v(t, x) = \frac{|x|^2}{4t^2}e^{i\frac{|x|^2}{4t}} \vartheta\left(\frac{x -2tv}{t^{\frac{1}{2}}}\right) + i\left(-\frac{1}{2}xt^{-\frac{3}{2}} - \frac{2vt^{-\frac{1}{2}}}{2}\right)\cdot (\nabla\vartheta)\left(\frac{x -2tv}{t^{\frac{1}{2}}}\right)e^{i\frac{|x|^2}{4t}}.
            \end{equation*}
            Next, we need to handle the spatial derivatives using 
            \[
                \Delta(fg) = f \Delta g + 2(\nabla f) \cdot (\nabla g) + g \Delta f.    
            \]
            Direct computation yields 
            \begin{equation*}
                \nabla e^{i\frac{|x|^2}{4t}} = \frac{ix}{2t}e^{i\frac{|x|^2}{4t}}
            \end{equation*}
            and 
            \begin{equation*}
                \nabla \vartheta\left(\frac{x -2tv}{t^{\frac{1}{2}}}\right) = \frac{1}{t^\frac{1}{2}}(\nabla\vartheta)\left(\frac{x -2tv}{t^{\frac{1}{2}}}\right). 
            \end{equation*}
            To compute the second derivatives, we use the product rule for gradients and divergences. This yields
            \begin{equation}
                \Delta e^{i\frac{|x|^2}{4t}} = \frac{id}{2t}e^{i\frac{|x|^2}{4t}} - \frac{|x|^2}{4t^2} e^{i \frac{|x|^2}{4t}}
            \end{equation}
            and 
            \begin{equation}
                \Delta \vartheta\left(\frac{x-2tv}{t^\frac{1}{2}}\right) = \frac{1}{t} (\Delta \vartheta)\left(\frac{x-2tv}{t^\frac{1}{2}}\right).
            \end{equation}
            Putting everything together, we have
            \begin{equation}\label{E:wavepacketpde1}
                \begin{aligned}
                    (i\partial_t + \Delta) \Psi_v (t, x) &= 
                    \frac{|x|^2}{4t^2}e^{i\frac{|x|^2}{4t}} \vartheta\left(\frac{x -2tv}{t^{\frac{1}{2}}}\right) \\
                    &+ i\left(-\frac{1}{2}xt^{-\frac{3}{2}} - \frac{2vt^{-\frac{1}{2}}}{2}\right)\cdot (\nabla\vartheta)\left(\frac{x -2tv}{t^{\frac{1}{2}}}\right)e^{i\frac{|x|^2}{4t}} \\
                    &+ \left(\frac{id}{2t}e^{i\frac{|x|^2}{4t}} - \frac{|x|^2}{4t^2} e^{i \frac{|x|^2}{4t}}\right)\vartheta\left(\frac{x -2tv}{t^{\frac{1}{2}}}\right) \\ 
                    &+ \frac{ix}{t^\frac{3}{2}} e^{i\frac{|x|^2}{4t}} \cdot (\nabla \vartheta)\left(\frac{x -2tv}{t^{\frac{1}{2}}}\right) \\
                    &+ \frac{1}{t} (\Delta\vartheta)\left(\frac{x -2tv}{t^{\frac{1}{2}}}\right) e^{i \frac{|x|^2}{4t}},
                \end{aligned}
            \end{equation}
            where the first line contains the time derivative and the second contains the space derivatives. One may simplify this expression somewhat with some simple algebra: 
            \begin{equation}
                \mathrm{RHS}\eqref{E:wavepacketpde1} =
                \begin{multlined}[t]
                    i\left(\frac{x-2vt}{2t^{\frac{3}{2}}}\right)\cdot (\nabla\vartheta)\left(\frac{x -2tv}{t^{\frac{1}{2}}}\right)e^{i\frac{|x|^2}{4t}} \\+\frac{id}{2t}e^{i\frac{|x|^2}{4t}}\vartheta\left(\frac{x -2tv}{t^{\frac{1}{2}}}\right) 
                    + \frac{1}{t}(\Delta \vartheta) \left(\frac{x-2vt}{t^\frac{1}{2}}\right)e^{i\frac{|x|^2}{4t}}.
                \end{multlined} 
            \end{equation}
        \end{proof}
        The point of this lemma is that the piece of \eqref{E:approxsoln} in brackets has exactly the same localization properties as $\vartheta$, but decays a factor of $\tfrac{1}{t}$ better than the original wavepacket. \par
        Next, define 
        \begin{equation*}
            \gamma(t, v) = \int_{\R^d} u(t, x) \bbar{\Psi_v}(t, x) \dx.
        \end{equation*}
        This quantity measures the decay of the solution $u$ along the ray $\Gamma_v$. Using Plancherel's theorem we can rewrite this expression as 
        \begin{equation*}
            \gamma(t, v) = \int_{\R^d} \hat{u}(t, \xi) \bbar{\hat{\Psi_v}}(t, \xi) \dxi.
        \end{equation*}
        One can compute the Fourier transform of $\Psi_v$ explicitly. Indeed, we have
        \begin{align*}
            \hat{\Psi_v}(t, \xi) &= (2\pi)^{-\frac{d}{2}}\int_{\R^d} e^{-ix \cdot \xi} e^{i\frac{|x|^2}{4t}} \vartheta\left(t^{-\frac{1}{2}}(x-2vt)\right)\dx \\
            &= (2\pi)^{-\frac{d}{2}}\int_{\R^d} e^{-i(x-2vt+2vt)\cdot\xi} e^{i \frac{|x-2vt+2vt|^2}{4t}}\vartheta(t^{-\frac{1}{2}}(x-2vt))\dx\\
            &=
                (2\pi)^{-\frac{d}{2}}e^{-2ivt\cdot \xi}\int_{\R^d}e^{-i(x-2vt)\cdot\xi} e^{i\frac{|x-2vt|^2}{4t}+i(x-2vt)\cdot v+ it|v|^2}\vartheta(t^{-\frac{1}{2}}(x-2vt))\dx
            \\
            &= 
                (2\pi)^{-\frac{d}{2}}e^{-it|\xi|^2}e^{it|\xi - v|^2} \int_{\R^d}e^{-i(x-2vt)\cdot(\xi - v)}e^{i\frac{|x-2vt|^2}{4t}}\vartheta(t^{-\frac{1}{2}}(x-2vt))\dx
            \\
            &= t^{\frac{d}{2}}e^{-it|\xi|^2} \tilde{\vartheta}(t^{\frac{1}{2}}(\xi - v)),
        \end{align*}
        where the function $\tilde{\vartheta}(\xi)$ is given by 
        \begin{equation*}
            \tilde{\vartheta}(\xi) = e^{i|\xi|^2}\F\left[e^{i\frac{|x|^2}{4}}\vartheta(x)\right](\xi).
        \end{equation*}
        Direct computation shows that in fact 
        \begin{equation*}
            \int_{\R^d} \tilde{\vartheta}(\xi) \dxi = \int_{\R^d} \vartheta(x) \dx =1. 
        \end{equation*}
        If we remember where we started, we had 
        \begin{align*}
            \gamma(t, v) &= \int_{\R^d} u(t, x) \bbar{\Psi_v}(t,x) \dx \\
            &= \int_{\R^d}t^{\frac{d}{2}}\hat{u}(t, \xi) \bbar{e^{-it|\xi|^2} \tilde{\vartheta}(t^{\frac{1}{2}}(\xi - v))}\dxi.
        \end{align*}
        Using the fact that the complex conjugate in the definition of $\gamma$ introduces a minus sign in the argument of $\hat{\Psi_v}$, we see  
        \begin{equation}\label{E:gammafreqrepn}
            \gamma(t, \xi) = e^{it|\xi|^2} \hat{u}(t, \xi) \ast_{\xi} t^{\frac{d}{2}}\tilde{\vartheta}(t^{\frac{1}{2}}\xi).
        \end{equation}
        We'd now like to compare $\gamma(t, v)$ to a solution $u(t, x)$ of \eqref{E:VNLS} along a ray $\Gamma_v$. Since we'll need it, note that by direct computation, we have the following equality in the sense of Fourier multipliers: 
        \begin{equation}
            |\nabla_v|^s = (2t)^s |\nabla_x|^s \qtq{for all} s \in \R.
        \end{equation}
        where we recall that $v = \tfrac{x}{2t}$ (in particular, the units are consistent across the equals sign). With these definitions in hand, we can state the lemma.
        \begin{lemma}\label{L:gammabounds}
            The function $\gamma(t, v)$ satisfies the bounds 
            \begin{equation}
                \|\gamma\|_{L^\infty} \lesssim t^{\frac{d}{2}}\|u\|_{L^\infty}, \quad \|\gamma\|_{L_v^2} \lesssim \|u\|_{L_x^2},\quad \| |\nabla_v|^\beta \gamma\|_{L_v^2} \lesssim\| |J|^{\beta} u\|_{L_x^2}.
            \end{equation}
            We also have the physical space bounds
            \[
                |u(t, 2vt) - t^{-\frac{d}{2}} e^{i\frac{|x|^2}{4t}} \gamma(t, v)| \lesssim t^{-\frac{\beta}{2} - \frac{d}{4}} \| |J|^{\beta} u\|_{L_x^2}
            \]  
            and the Fourier space bounds 
            \[
                |\hat{u}(t, \xi) - e^{-it|\xi|^2} \gamma(t, \xi)| \lesssim t^{\frac{d}{4}- \frac{\beta}{2}} \| |J|^{\beta}u\|_{L_x^2}.
            \]

        \end{lemma}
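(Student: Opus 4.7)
The plan is to reduce all five estimates to computations involving the auxiliary function $w := \M(-t)u$, exploiting the fact that the phase $e^{i|x|^2/4t}$ in $\Psi_v$ cancels the corresponding factor in $u = \M(t)w$, so that $\gamma(t,v) = \int w(x)\vartheta(t^{-1/2}(x-2tv))\,dx$. The bound $\|\gamma\|_{L^\infty} \lesssim t^{d/2}\|u\|_{L^\infty}$ is then immediate from H\"older and the fact that the dilated bump has $L^1$ mass $t^{d/2}\|\vartheta\|_{L^1}$. For the two $L^2$-type bounds, I view $\gamma(t,v)$ (taking $\vartheta$ real and even for convenience) as $(w \ast \Theta_t)(2tv)$ with $\Theta_t(x) = \vartheta(x/t^{1/2})$; the substitution $\eta = 2tv$ together with Plancherel gives $\|\gamma\|_{L^2_v} = (2t)^{-d/2}\|w\ast\Theta_t\|_{L^2}$ and $\||\nabla_v|^\beta\gamma\|_{L^2_v} = (2t)^{\beta - d/2}\||\nabla|^\beta (w\ast\Theta_t)\|_{L^2}$, and Young's inequality with $\|\Theta_t\|_{L^1} = t^{d/2}\|\vartheta\|_{L^1}$ combined with the identity \eqref{E:jbetaidentity} (from which $\||\nabla|^\beta w\|_{L^2} = (2t)^{-\beta}\||J|^\beta u\|_{L^2}$) closes both bounds.

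For the pointwise physical-space bound, I observe that $u(t,2tv) = e^{it|v|^2}w(2tv)$ and the proposed approximation picks up the same phase, so the problem reduces to estimating $|w(2tv) - t^{-d/2}\gamma(t,v)|$. Rescaling the integral defining $\gamma$ rewrites this as $\left|\int[w(2tv+t^{1/2}y) - w(2tv)]\vartheta(y)\,dy\right|$; expressing the difference via the Fourier inversion formula in $x$ collapses the integrand to $(2\pi)^{-d/2}\int \hat w(\xi) e^{i2tv\cdot\xi} H(t^{1/2}\xi)\,d\xi$, where $H(\eta) := (2\pi)^{d/2}\hat\vartheta(-\eta) - 1$ satisfies $H(0) = 0$ since $\int\vartheta = 1$. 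A Cauchy--Schwarz split with the dual weights $|\xi|^\beta \cdot |\xi|^{-\beta}$ yields a factor $\||\nabla|^\beta w\|_{L^2}$ and a factor $\||\xi|^{-\beta}H(t^{1/2}\xi)\|_{L^2}$; by scaling the latter equals $t^{\beta/2 - d/4}\||z|^{-\beta}H(z)\|_{L^2}$, producing the claimed $t^{-\beta/2 - d/4}\||J|^\beta u\|_{L^2}$.

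The Fourier-space bound is the exact dual. From \eqref{E:gammafreqrepn}, the error is $|f(\xi) - (f\ast K_t)(\xi)|$ with $f = e^{it|\cdot|^2}\hat u$ and $K_t$ a unit-mass mollifier; performing the analogous Cauchy--Schwarz on the inverse-Fourier side and identifying $\check f$ with $e^{-it\Delta}u$ by Fourier inversion, the first factor equals $\||y|^\beta e^{-it\Delta}u\|_{L^2} = \||J|^\beta u\|_{L^2}$ by \eqref{E:Jtpower2}, while the second factor scales as $t^{d/4 - \beta/2}$, giving exactly the claimed rate. The only real obstacle is verifying finiteness and correct scaling of $\||z|^{-\beta}H(z)\|_{L^2}$: this uses that $H$ is Schwartz (so integrability at infinity holds for $\beta > d/2$), together with the first-order vanishing $H(z) = O(|z|)$ at the origin (so integrability near $0$ holds for $\beta < d/2+1$); both conditions are satisfied by our choice of $\beta$ just above $d/2$.
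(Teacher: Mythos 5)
Your proof is correct, and for the first three estimates it follows the paper's route exactly: factor out the phase $w = \M(-t)u$, recognize $\gamma(t,v)$ as a convolution of $w(t,2tv)$ against a rescaled unit-mass bump in $v$, then apply Young's inequality (Plancherel to push the derivative through the convolution for the $|\nabla_v|^\beta$ bound) and the identity \eqref{E:jbetaidentity} to convert $\||\nabla|^\beta w\|_{L^2}$ into $(2t)^{-\beta}\||J|^\beta u\|_{L^2}$.

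For the two pointwise bounds you take a genuinely different decomposition. The paper first estimates the increment pointwise via the homogeneous Sobolev (Morrey/H\"older) embedding, $|w(2t(v-z))-w(2tv)| \lesssim |z|^{\beta-\frac d2}\||\nabla_v|^\beta w(2tv)\|_{L^2_v}$ for $\tfrac d2 < \beta < 1+\tfrac d2$, and only then integrates against the bump, paying the $|z|^{\beta-\frac d2}$ moment of $\vartheta$. You instead integrate the increment against $\vartheta$ first, which produces the multiplier $H(\eta) = (2\pi)^{d/2}\hat\vartheta(-\eta)-1$ vanishing at $\eta=0$, and then split by Cauchy--Schwarz with the weights $|\xi|^{\pm\beta}$. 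The $\||\xi|^\beta\hat w\|_{L^2}$ factor and the rescaling of $\||\xi|^{-\beta}H(t^{1/2}\xi)\|_{L^2}$ reproduce the same $t^{-\frac\beta2-\frac d4}$ rate, and the analogous computation in the dual variable gives the Fourier-space bound with $\||y|^\beta e^{-it\Delta}u\|_{L^2} = \||J|^\beta u\|_{L^2}$ via \eqref{E:Jtpower2}. Your version is essentially a self-contained re-derivation of the embedding the paper cites, just with the order of integration and Cauchy--Schwarz swapped; both rely on the same window $\tfrac d2 < \beta < 1+\tfrac d2$ and the same properties of $\vartheta$. What it buys you is independence from any named Sobolev-embedding black box, at the cost of carrying the explicit multiplier $H$ through the argument.

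One small misstatement worth fixing: $H$ is \emph{not} Schwartz, since $H(z) \to -1$ as $|z|\to\infty$ (it is a Schwartz function minus a constant). Your conclusion is nevertheless correct — only boundedness of $H$ is needed for integrability of $|z|^{-2\beta}|H(z)|^2$ at infinity when $\beta > \tfrac d2$, and the first-order vanishing at the origin handles $\beta < 1+\tfrac d2$ — but the parenthetical justification should be corrected.
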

        \begin{proof}
            Let $w = e^{-i|x|^2/4t}u$. Then we can express $\gamma$ in terms of $w$ as a convolution with respect to the variable $v$: 
            \begin{equation}\label{E:gammaconvrep}
                t^{-\frac{d}{2}}\gamma(t, v) = w(t, 2vt) \ast_v 2^d t^{\frac{d}{2}} \vartheta(2t^\frac{1}{2}v),
            \end{equation}
            noting that $v \mapsto 2^d t^{\frac{d}{2}}\vartheta((2t)^{\frac{1}{2}}v)$ has integral 1. By Young's inequality, then, we have the immediate convolution bounds 
            \begin{align*}
                \|\gamma(t, v)\|_{L^\infty} &\lesssim t^{\frac{d}{2}}\|w(t, 2vt)\|_{L^\infty} = t^{\frac{d}{2}}\|u\|_{L^\infty}, \\
                \|\gamma(t, v)\|_{L_v^2} &\lesssim t^{\frac{d}{2}}\|w(t, 2vt)\|_{L_v^2} \sim \|u\|_{L_x^2}.
            \end{align*}
            A straightforward estimate via Young's inequality and the commutativity of Fourier multipliers yields the third bound, \textit{viz}
            \begin{align*}
                \| |\nabla_v|^{\beta} \gamma(t, v)\|_{L_v^2} &\lesssim \| |\nabla_v|^{\beta} w(t, 2vt)\|_{L_v^2} \\
                &\lesssim (2t)^{\beta-d} \left\|\int_{\R^d} e^{i\frac{\eta}{2t}\cdot x}\left|\frac{\eta}{2t}\right|^\beta \tilde{w}\left(t,\frac{\eta}{2t}\right)\d\eta \right\|_{L_x^2} \\
                &\lesssim (2t)^{\beta} \left\|\int_{\R^d} e^{i \xi \cdot x} |\xi|^{\beta} \hat{w}(t, \xi) \dxi \right\|_{L_x^2} \\
                &\lesssim  \| |J|^{\beta}u\|_{L_x^2},  
            \end{align*}
            where to get from the second line to the third we used the fact that $v = \tfrac{x}{2t}$ implies that their Fourier dual variables satisfy $\eta = 2t\xi$. To get rid of the $(2t)^\beta$, we used the definition \eqref{E:Jtdefn1} to absorb it back into the $|\nabla_x|^{\beta}$. \par
            For the physical space bounds, we need to compare $u(t, 2vt)$ with $\gamma(t, v)$. To do this, note that 
            \begin{equation}
                |u(t, 2vt) - t^{-\frac{d}{2}}e^{i\frac{|x|^2}{4t}}\gamma(t, v)| = |w(t, 2vt) - t^{-\frac{d}{2}}\gamma(t, v)|. 
            \end{equation}
            We can then use the representation \eqref{E:gammaconvrep} to write 
            \begin{equation}
                |w(t, 2vt) -t^{-\frac{d}{2}}\gamma(t, v)| = \left| \int_{\R^d} w(t, 2(v-z)t) (2t^{\frac{1}{2}})^d\vartheta((2t)^{\frac{1}{2}}z) \d z - w(t, 2vt)\right|.
            \end{equation}
            Using that $\vartheta(z)$ has unit integral, we can again rewrite the above equation as 
            \begin{equation}\label{E:differencerepn}
                |w(t, 2vt) -t^{-\frac{d}{2}}\gamma(t, v)| = \left| \int_{\R^d} [w(t, 2(v-z)t) - w(t, 2vt)](2t^{\frac{1}{2}})^d\vartheta((2t)^{\frac{1}{2}}z) \d z\right|.
            \end{equation}
            Then using homogeneous Sobolev embedding, we find (using that $\tfrac{d}{2}< \beta < 1+\tfrac{d}{2}$), 
            \begin{equation}
                |w(t, 2t(v-z)) - w(t, 2vt)| \lesssim |z|^{\beta - \frac{d}{2}} \| |\nabla_v|^{\beta} w(t, 2vt)\|_{L_v^2}.
            \end{equation}
            This allows us to continue the estimate of \eqref{E:differencerepn} by 
            \begin{align*}
                |w(t, 2vt) - t^{-\frac{d}{2}}\gamma(t, v)| &\lesssim \int_{\R^d} |z|^{\beta - \frac{d}{2}} \| |\nabla_v|^{\beta} w(t, 2vt)\|_{L_v^2} (2t^{\frac{1}{2}})^d |\vartheta((2t)^{\frac{1}{2}}z)| \d z \\
                &\lesssim \| |\nabla_v|^\beta w(t, 2vt)\|_{L_v^2} (2t^{\frac{1}{2}})^{-(\beta-\frac{d}{2})} \\
                &\lesssim t^{-\frac{\beta}{2}-\frac{d}{4}} \| |J|^{\beta}u\|_{L_x^2}. 
            \end{align*}
             
            To obtain the frequency estimate, we recall the definition \eqref{E:gammafreqrepn} and that $\tilde{\vartheta}$ has unit integral to write 
            \begin{equation}
                |\hat{u}(t, \xi) - e^{-it |\xi|^2} \gamma(t, \xi)| \leq \int_{\R^d} |  e^{it |\xi-\eta|^2}\hat{u}(\xi - \eta) - e^{it|\xi|^2}\hat{u}(\xi)| (2t)^{\frac{d}{2}} |\tilde{\vartheta}((2t)^{\frac{1}{2}}\eta)| \d\eta.
            \end{equation}
            Applying the same Sobolev embedding argument from earlier, we can bound the difference in the integral above by 
            \begin{equation}
                |e^{it|\xi - \eta|^2}\hat{u}(\xi - \eta) - e^{it|\xi|^2} \hat{u}(\xi)| \lesssim |\eta|^{\beta - \frac{d}{2}} \| |\nabla_\xi|^\beta (e^{it |\xi|^2} \hat{u})\|_{L_\xi^2}.
            \end{equation}
            Substituting this into the integral and applying the same argument as for the physical-space case yields the estimate 
            \begin{equation}
                |\hat{u}(t, \xi) - e^{-it|\xi|^2} \gamma(t, \xi)| \lesssim t^{\frac{d}{4}- \frac{\beta}{2}} \| |J|^{\beta}u\|_{L_x^2}.
            \end{equation}
        \end{proof}
        
        \begin{lemma}
            Let $u(t, x)$ be a solution to \eqref{E:VNLS}. Then we have
            \begin{equation}\label{E:gammaODE}
                \partial_t \gamma(t, v) = \frac{i}{2t} (|\cdot|^{-1} \ast |\gamma(t, v)|^2)\gamma(t, v) + \cR(t, v),
            \end{equation}
            where the remainder $\cR(t,v)$ satisfies the estimate 
            \begin{multline}\label{E:remainderestimate}
                \|\cR(t, v)\|_{L^\infty} \lesssim t^{-\frac{3}{2}}\|\jbrak{J}^\beta u\|_{L^2}+ {t^{-\frac{d}{2}+\frac{3}{4}-\frac{\beta}{2d}}\|u\|_{L^\infty}^{1+\frac{1}{d}} \|u\|_{L^2}^{\frac{2(d-1)}{d}} \| |J|^\beta u\|_{L^2}^\frac{1}{d}} \\+t^{\frac{d}{2}-\frac{1}{d}(\frac{\beta}{2}+\frac{d}{4})}\|u\|_{L^\infty}^{1+\frac{1}{d}}\|u\|_{L^2}^{\frac{2(d-1)}{d}} \| |J|^\beta u\|_{L^2}^{\frac{1}{d}}.
            \end{multline}
        \end{lemma}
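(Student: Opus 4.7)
The plan is to differentiate $\gamma(t,v) = \int u(t,x)\,\overline{\Psi_v}(t,x)\,dx$ in time and identify the claimed nonlinear oscillator as the main contribution, absorbing everything else into $\cR$.

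\textbf{Step 1 (Time derivative and integration by parts).} Using (i) the Hartree equation rewritten as $\partial_t u = i\Delta u - iN_u$ with $N_u = (|\cdot|^{-1}\ast|u|^2)u$, and (ii) the wavepacket identity $(i\partial_t + \Delta)\Psi_v = E_v$ from the preceding lemma, we get
\[
    \partial_t \gamma(t,v) = -i \int N_u(x)\,\overline{\Psi_v}(x)\,dx + i \int u(x)\,\overline{E_v}(x)\,dx,
\]
since the two Laplacian contributions $i\int \Delta u\,\overline{\Psi_v} - i\int u\,\Delta\overline{\Psi_v}$ cancel upon integration by parts.

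\textbf{Step 2 (Wavepacket error term).} For the second integral, one exploits the divergence structure $E_v = \tfrac{1}{2t} e^{i|x|^2/4t}\nabla\cdot\{\cdots\}$ from \eqref{E:approxsoln}. Integrating by parts and using $\nabla(e^{-i|x|^2/4t} u) = -\tfrac{i}{2t} e^{-i|x|^2/4t} J(t)u$ transfers the derivative onto $Ju$, providing an extra $t^{-1}$. Cauchy--Schwarz against the $\sqrt{t}$-localized bracket (whose $L^2$ norm scales like $t^{1/2 + d/4}$), together with $\|J u\|_{L^2} \lesssim \|\langle J\rangle^\beta u\|_{L^2}$ for $\beta \geq 1$, then produces the first remainder term $t^{-3/2}\|\langle J\rangle^\beta u\|_{L^2}$.

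\textbf{Step 3 (Extracting the main term).} In the first integral, substitute $x = 2vt + \sqrt{t}\,y$ and replace both $u$ and $|u|^2$ by their wavepacket approximations $u(t,x)\approx t^{-d/2} e^{i|x|^2/4t}\gamma(t, x/(2t))$ from \Cref{L:gammabounds}. The phase $e^{i|x|^2/4t}$ inside $u$ cancels against the conjugate phase in $\overline{\Psi_v}$, leaving a non-oscillatory integrand. A further rescaling in the Hartree convolution converts $(|\cdot|^{-1}\ast|u|^2)(x)$ into a constant multiple of $\tfrac{1}{t}(|\cdot|^{-1}\ast|\gamma|^2)(x/(2t))$, and freezing $\gamma$ at $v$ via $\int \vartheta = 1$ produces the advertised main term $\tfrac{i}{2t}(|\cdot|^{-1}\ast|\gamma|^2)(v)\gamma(t,v)$.

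\textbf{Step 4 (Replacement errors).} Three error sources remain from Step 3: (a) the pointwise discrepancy between $u$ and its wavepacket approximation, bounded by $t^{-\beta/2 - d/4}\||J|^\beta u\|_{L^2}$ via \Cref{L:gammabounds}; (b) the analogous discrepancy for $|u|^2$ inside the convolution; and (c) the replacement of $\gamma(t,v+y/(2\sqrt{t}))$ by $\gamma(t,v)$, controlled through $\||\nabla_v|^\beta \gamma\|_{L^2_v}$ and the physical-space Sobolev embedding used in \Cref{L:gammabounds}. Each error is paired against a remaining copy of the Hartree nonlinearity $|\cdot|^{-1}\ast(\text{something})$, which cannot be estimated in $L^\infty$ by Hardy--Littlewood--Sobolev at the endpoint; instead, using $|x|^{-1}\in L^{d,\infty}$ with the Lorentz-space H\"older inequality reduces the estimate to placing the remaining factor in $L^{2d/(d-1),2}$, where \Cref{L:interplemma} gives $\lesssim \|u\|_{L^2}^{(d-1)/d}\|u\|_{L^\infty}^{1/d}$. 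Collecting the leftover $\|u\|_{L^\infty}$ from the outer copy of $u$ in $N_u$ and book-keeping the $t$-powers from the changes of variable and the $\sqrt{t}$-scale of the wavepacket produces the second and third terms in \eqref{E:remainderestimate}.

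The main obstacle is Step 4: the singular kernel $|x|^{-1}$ sits at the inadmissible $L^\infty$ endpoint of Hardy--Littlewood--Sobolev, forcing the argument through Lorentz-space interpolation. This is what generates the fractional exponents $\tfrac{1}{d}$ and $\tfrac{d-1}{d}$ appearing in \eqref{E:remainderestimate}, and it requires careful tracking of how the three replacement errors combine with the rescalings $x = 2vt + \sqrt t\,y$ and $y \mapsto 2ty$ to yield exactly the stated powers of $t$.
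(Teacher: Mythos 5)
Your overall scheme — differentiate $\gamma$, cancel the Laplacian by parts, isolate a wavepacket error, and handle the singular convolution via the $L^\infty$ endpoint of HLS plus Lorentz interpolation — matches the paper's strategy. But there is a genuine error in Step~2, and your Step~3--4 decomposition differs from the paper's in a way worth noting.

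\textbf{The gap in Step 2.} Cauchy--Schwarz does not give the $t^{-3/2}$ decay you claim. The bracket has $L^2_x$ norm $\sim t^{1/2 + d/4}$, so pairing $\frac{1}{(2t)^2}\M(-t)Ju$ against it in $L^2 \times L^2$ yields
\[
    |\cR_1| \lesssim t^{-2}\,\|Ju\|_{L^2}\, t^{\frac12 + \frac d4} = t^{-\frac32 + \frac d4}\,\|Ju\|_{L^2},
\]
which has an extra $t^{d/4}$ relative to what you need. Under the bootstrap, $\|Ju\|_{L^2}\lesssim \|\jbrak{J}^\beta u\|_{L^2}\sim \eps\jbrak{t}^{\eps^{3-1/d}}$ is slightly growing, so $t^{-3/2+d/4}\|Ju\|_{L^2}$ is non-integrable in $d=2,3$ and the bootstrap fails. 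The paper instead views $\cR_1$ as a convolution in $v$, applies Young's inequality $L^\infty\ast L^1$ (putting $Ju$ in $L^\infty_x$ and the bracket in $L^1_x$, whose $L^1$ norm scales like $t^{1/2+d/2}$), and then, crucially, estimates $\|Ju\|_{L^\infty_x}$ via the $L^1\to L^\infty$ dispersive estimate combined with $J(t)=e^{it\Delta}xe^{-it\Delta}$:
\[
    \|Ju\|_{L^\infty_x} \lesssim t^{-\frac d2}\|x\, e^{-it\Delta}u\|_{L^1_x} \lesssim t^{-\frac d2}\|\jbrak{x}^\beta e^{-it\Delta}u\|_{L^2_x} \lesssim t^{-\frac d2}\|\jbrak{J}^\beta u\|_{L^2}.
\]
The extra $t^{-d/2}$ from dispersive decay is exactly what compensates the $t^{d/2}$ gained in passing from $L^2$ of the bracket to $L^1$, and is what produces the sharp $t^{-3/2}\|\jbrak{J}^\beta u\|_{L^2}$ bound. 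Without this idea, your estimate of the wavepacket error term does not close.

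\textbf{Steps 3--4.} Your decomposition of the nonlinear term also departs from the paper's. You replace both the outer $u$ and the inner $|u|^2$ by their wavepacket approximations and then freeze $\gamma$ at $v$, giving three error sources (a), (b), (c). The paper never approximates the outer $u$: it first freezes the convolution argument at $x=2vt$ (error $\cR_2$, estimated by factoring $|w|^2_A-|w|^2_B$ and interpolating $w_A-w_B$ between $L^2$ and $L^\infty$ with Sobolev embedding at scale $|z|^{\beta-d/2}$), and since $|\cdot|^{-1}\ast|u(t,2vt)|^2$ is then independent of $x$ one pulls out $\int u\bbar{\Psi_v}\,dx = \gamma$ exactly; finally it replaces $|u(2vt)|^2$ by $(2t)^{-d}|\gamma|^2$ inside the convolution (error $\cR_3$). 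This avoids your error (a) entirely and leads to the precise exponents in \eqref{E:remainderestimate}; your extra error (a) has the form $t^{d/4-\beta/2}\||J|^\beta u\|_{L^2}\|u\|_{L^2}^{2(d-1)/d}\|u\|_{L^\infty}^{2/d}$, which is still integrable under the bootstrap but is not one of the three stated terms. That is a cosmetic difference, not a flaw; the substantive defect is Step~2.
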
    
        \begin{proof}
            Begin by taking the time derivative of $\gamma(t, v)$. By definition, this yields 
            \begin{align*}
                \partial_t \gamma(t, v) &= \int_{\R^d} \partial_t u \bbar{\Psi_v} + u \bbar{\partial_t \Psi_v} \dx \\
                &= \int_{\R^d} i \left[\Delta u - (|\cdot|^{-1} \ast |u|^2) u\right] \bbar{\Psi_v} + u \bbar{\partial_t \Psi_v} \dx \\
                &= i\int_{\R^d} u \bbar{\left[i \partial_t + \Delta\right]\Psi_v} \dx - i \int_{\R^d} (|\cdot|^{-1} \ast |u|^2)u \bbar{\Psi_v} \dx,
            \end{align*}
            where the integrations by parts to move the Laplace operator from the $u$ to the wavepacket $\Psi_v$ are justified using the decay of $\Psi_v$. Next, we use the computation \eqref{E:approxsoln} to write 
            \begin{equation}
                i \int_{\R^d} u \bbar{\left[i\partial_t + \Delta\right]\Psi_v} \dx = \frac{i}{2t} \int_{\R^d} u \M(-t) \nabla \cdot \left\{-i(x-2vt)\bbar{\vartheta} + 2t^{\frac{1}{2}}\bbar{\nabla\vartheta}\right\}\dx.
            \end{equation}
            We can then integrate by parts to rewrite the integral as 
            \begin{multline*}
                \frac{i}{2t} \int_{\R^d} u \M(-t) \nabla \cdot \left\{-it^{-\frac{1}{2}}(x-2vt)\bbar{\vartheta} + 2t^{\frac{1}{2}}\bbar{\nabla\vartheta}\right\}\dx \\= -\frac{i}{2t}\int_{\R^d} \nabla (\M(-t) u) \cdot \left(-i (x-2vt)\bbar{\vartheta} + 2t^{\frac{1}{2}} \bbar{\nabla \vartheta} \right)\dx.
            \end{multline*}
            We then use the definition of $J$ to rewrite this once more as 
                \begin{multline}
                    -\frac{i}{2t}\int_{\R^d} \nabla (e^{-i\frac{|x|^2}{4t}} u) \cdot \left(-i (x-2vt)\bbar{\vartheta} + 2t^{\frac{1}{2}} \bbar{\nabla \vartheta} \right)\dx \\= -\frac{1}{(2t)^2} \int_{\R^d} e^{-i\frac{|x|^2}{4t}} Ju \cdot \left(i (x-2vt)\bbar{\vartheta} -2t^\frac{1}{2}\bbar{\nabla \vartheta}\right) \dx.
                \end{multline}
            Combining this with the second term above, we can rewrite things in the following fashion: 
            \begin{align}
                \partial_t \gamma(t, v) &= -\frac{1}{(2t)^2}\int_{\R^d} \M(-t) Ju \cdot \{-i (x-2vt) \bbar{\vartheta} +2t^{\frac{1}{2}}\bbar{\nabla \vartheta}\} \dx \\
                &-i \int_{\R^d} u\bbar{\Psi_v} (|\cdot|^{-1} \ast (|u|^2 - |u(t, 2vt)|^2)) \dx \\
                &+ \begin{multlined}[t]
                    i \gamma |\cdot|^{-1} \ast |u(t, vt)|^2 - i \frac{1}{2t} (|\cdot|^{-1} \ast |\gamma(t, v)|^2 )\gamma \\+ i \frac{1}{2t}(|\cdot|^{-1} \ast |\gamma(t, v)|^2)\gamma
                \end{multlined} \\
                &\defe \frac{i}{2t}(|\cdot|^{-1} \ast |\gamma(t, v)|^2 )\gamma + \mathcal{R}_1 + \mathcal{R}_2 + \mathcal{R}_3,
            \end{align}
            where we define 
            \begin{equation}
                \cR_1 \defe -\frac{1}{(2t)^2}\int_{\R^d} \M(-t) Ju \cdot \{-i (x-2vt) \bbar{\vartheta} +2t^{\frac{1}{2}}\bbar{\nabla \vartheta}\} \dx,
            \end{equation}
            \begin{equation}
               \cR_2 \defe -i \int_{\R^d} u\bbar{\Psi_v} (|\cdot|^{-1} \ast (|u|^2 - |u(t, 2vt)|^2)) \dx, 
            \end{equation} 
            and 
            \begin{equation}
                \cR_3 \defe -\gamma(t, v)[|\cdot|^{-1} \ast (|u(t, 2vt)|^2 - \frac{1}{2t} |\gamma(t, v)|^2)],
            \end{equation}
            and where here and throughout we abide by the convention in \Cref{R:convolution}.
            We can now proceed to estimate each of the terms $\|\cR_j\|_{L^\infty}$. For $\cR_1$, we can change variables $x = 2tz$ to rewrite $\cR_1$ as a convolution: 
            \begin{align}
                \cR_1 &=
                \begin{multlined}[t]
                    -(2t)^{d-2}\int_{\R^d} \tilde{\M}(-t) Ju(2tz) \\\times \{-2it(z-v)\bbar{\vartheta(2t^{\frac{1}{2}}(z-v))} + 2t^{\frac{1}{2}} \bbar{\nabla \vartheta(2t^{\frac{1}{2}}(z-v))}\}\dz 
                \end{multlined}\\
                &= -\frac{1}{(2t)^2} \tilde{\M}(-t) (Ju)(2tz) \ast_v \left\{-2itz \vartheta(2t^{\frac{1}{2}}z) + 2t^{\frac{1}{2}} \bbar{\nabla \vartheta(2t^{\frac{1}{2}}z)}\right\}.
            \end{align}
            Here we write $\tilde{\M}(-t) = e^{-it|z|^2}$. Using Young's convolution inequality, we can place the first part of the convolution in $L^\infty$ and the second in $L^1$ to obtain 
            \begin{align}
                \|\cR_1\|_{L^\infty} &\lesssim t^{d-2} \|Ju\|_{L^\infty} t^{\frac{1}{2}-\frac d2} \\
                &\lesssim t^{\frac{d}{2}-\frac{3}{2}} \|e^{it\Delta} x e^{-it\Delta}u\|_{L^\infty}\\
                &\lesssim t^{- \frac{3}{2}}\|x e^{-it\Delta}u \|_{L^1} \\
                &\lesssim t^{-\frac{3}{2}}\|\jbrak{x}^\beta e^{-it\Delta}u \|_{L^2} \\
                &\lesssim t^{-\frac{3}{2}}\{\|u\|_{L^2} + \||J|^\beta u\|_{L^2}\}\\
                &\lesssim t^{-\frac{3}{2}}\|\jbrak{J}^\beta u\|_{L^2}
            \end{align}
            which is an acceptable estimate. 
            For $\cR_2$, we have 
            \begin{equation}
                \cR_2 = -i\int_{\R^d} u \bbar{\Psi}_v \left[|\cdot|^{-1} \ast(|u|^2(x) - |u|^2(2tv))\right] \dx.
            \end{equation}
            We thus have (using the same notation for $w(t,x)$ as above)
            \begin{align}
                |\cR_2| &\lesssim \|u\|_{L^\infty} \int_{\R^d} \bbar{\vartheta}\left(\frac{x-2tv}{t^{\frac{1}{2}}}\right)\left[|\cdot|^{-1} \ast (|w|^2(x) - |w|^2(2tv))\right] \dx \\
                &\lesssim \|u\|_{L^\infty} t^{\frac{d}{2}}\int_{\R^d} 2^d t^{\frac{d}{2}}\bbar{\vartheta}(2t^{\frac{1}{2}}z)\left[ |\cdot|^{-1} \ast (|w|^2(2t(v-z)) - |w|^2(2tv)) \right] \dz \\
                &\lesssim \|u\|_{L^\infty}t^\frac{d}{2}\int_{\R^d} 2^d t^{\frac{d}{2}} \bbar{\vartheta}(2t^{\frac{1}{2}}z) \left\| |\cdot|^{-1} \ast (|w|^2(2t(v-z)) - |w|^2(2tv))\right\|_{L_v^\infty} \dz. \label{E:R2Linftyest}
            \end{align}
            Now we need to estimate the convolution in $L^\infty$. To do this, we use \Cref{L:interplemma}, which yields
            \begin{equation*}
                \||\cdot|^{-1} \ast (|w|^2(2t(v-z)) - |w|^2(2tv))\|_{L_v^\infty}\lesssim \| |w|^2(2t(v-z)) - |w|^2(2tv)\|_{L^{\frac{d}{d-1},1}}.
            \end{equation*}
            The right-hand side of this equation is bounded by 
            \begin{equation*}
            | |w|^2(2t(v-z)) - |w|^2(2tv)\|_{L^{\frac{d}{d-1},1}} \lesssim \begin{multlined}[t]
                \|w(2t(v-z)) - w(2tv)\|_{L_v^2}^{\frac{d-1}{d}} \\ \times \|w(2t(v-z)) - w(2tv)\|_{L_v^\infty}^{\frac{1}{d}} \\
                   \times \| w(2t(v-z)) + w(2tv)\|_{L_v^2}^{\frac{d-1}{d}} \\ \times \|w(2t(v-z)) + w(2tv)\|_{L_v^\infty}^{\frac{1}{d}}.
            \end{multlined}
            \end{equation*}
            We bound each term individually. We crudely control the $L^2$ norms by 
            \begin{equation*}
                \|w(2t(v-z)) \pm w(2tv)\|_{L_v^2}^{\frac{d-1}{d}} \lesssim t^{-\frac{d-1}{2}}\|u\|_{L^2}^{\frac{d-1}{d}}
            \end{equation*}
            the $L^\infty$ norm of the sum by 
            \begin{equation*}
                \|w(2t(v-z)) + w(2tv) \|_{L_v^\infty}^{\frac{1}{d}} \lesssim \|u\|_{L^\infty}^{\frac{1}{d}},
            \end{equation*}
            and we use Sobolev embedding to control 
            \begin{equation*}
                \|w(2t(v-z)) - w(2tv)\|_{L_v^\infty}^{\frac{1}{d}} \lesssim |z|^{\frac{\beta}{d}-\frac{1}{2}}\||J|^\beta u\|_{L^2}^{\frac{1}{d}} t^{-\frac{1}{2}}.
            \end{equation*}
            Putting this all together, we see that we can estimate the right-hand side of  \eqref{E:R2Linftyest} by 
            \begin{align*}
                \mathrm{RHS}\eqref{E:R2Linftyest}&\lesssim \|u\|_{L^\infty}^{1+\frac{1}{d}} \|u\|_{L^2}^{\frac{2(d-1)}{d}} \| |J|^\beta u\|_{L^2}^\frac{1}{d}t^{\frac{d}{2}-d+1-\frac{1}{2}} \int_{\R^d} 2^d t^{\frac{d}{2}}\bbar{\vartheta}(2t^{\frac{1}{2}}z) |z|^{\frac{\beta}{d}-\frac{1}{2}} \dz \\
                &\lesssim \|u\|_{L^\infty}^{1+\frac{1}{d}} \|u\|_{L^2}^{\frac{2(d-1)}{d}} \| |J|^\beta u\|_{L^2}^\frac{1}{d} t^{-\frac{d}{2}+\frac{3}{4}-\frac{\beta}{2d}}
            \end{align*}
            which is an acceptable estimate.\par
            Finally, we need an estimate on $\cR_3$. We have 
            \begin{equation}
                \cR_3 = -\gamma(t, v) \left[|\cdot|^{-1} \ast \left(|u(t, 2vt)|^2 - \frac{1}{2t}|\gamma(t, v)|^2\right)\right].
            \end{equation}
            Now notice that we can write 
            \begin{align}
                |\cdot|^{-1} \ast |u(t, 2vt)|^2 &= \int_{\R^d} \frac{|u(t, y)|^2}{|2vt- y|} \dy \\
                &= (2t)^{d-1} \int_{\R^d} \frac{|u(t, 2tz)|^2}{|v - z|} \dz.
            \end{align}
            Using this, $\cR_3$ can be rewritten as 
            \begin{equation}
                \cR_3 = -(2t)^{d-1}\gamma(t, v) \int_{\R^d} \frac{|u(2tz)|^2 - (2t)^{-d} |\gamma(t,z)|^2}{|v-z|}\dz.
            \end{equation}
            Now using the $L^\infty$ endpoint of the Hardy-Littlewood-Sobolev inequality \cite[Theorem 2.6]{oneilConvolutionOperatorsSpaces1963}, we can estimate the integral by 
            \begin{align}
                \left|\int_{\R^d} \frac{|w(2tz)|^2 - (2t)^{-d}|\gamma(t, z)|^2}{|v-z|}\dz\right| &\lesssim \left\| |w(2tz)|^2 - (2t)^{-d}|\gamma(t, z)|^2\right\| _{L^{\frac{d}{d-1},1}} \\
                &\lesssim \begin{multlined}[t]
                    \left\|w(2tz) - (2t)^{-\frac{d}{2}}\gamma(t, z)\right\|_{L^{\frac{2d}{d-1},2}} \\\times\left\|w(2tz) + (2t)^{-\frac{d}{2}}\gamma(t, z)\right\|_{L^{\frac{2d}{d-1}, 2}}.
                \end{multlined}
            \end{align}
            We now need to estimate each of the Lorentz norms above. By \Cref{L:interplemma}, we find that 
            \begin{align}
                &\begin{multlined}
                    \left\| w(2tz) - (2t)^{-\frac{d}{2}}\gamma(t, z)\right\|_{L^{\frac{2d}{d-1}, 2}} \\\lesssim \|w(2tz) - (2t)^{-\frac{d}{2}}\gamma(t, z)\|_{L^2}^{\frac{d-1}{d}}\left\|w(2tz) - (2t)^{-\frac{d}{2}}\gamma(t, z)\right\|_{L^\infty}^{\frac{1}{d}}\label{E:minusterm}
                \end{multlined}
                \\
                &\begin{multlined}
                    \left\|w(2tz) + (2t)^{-\frac{d}{2}}\gamma(t, z)\right\|_{L^{\frac{2d}{d-1}, 2}} \\\lesssim \left\|w(2tz) + (2t)^{-\frac{d}{2}}\gamma(t, z)\right\|_{L^2}^{\frac{d-1}{d}}\left\|w(2tz) + (2t)^{-\frac{d}{2}}\gamma(t, z)\right\|_{L^\infty}^{\frac{1}{d}}.\label{E:plusterm}
                \end{multlined}
            \end{align}
            We now estimate each of the norms on the right-hand side individually. \par
            For the terms in \eqref{E:plusterm}, we use \Cref{L:gammabounds} and the triangle inequality: 
            \begin{align}
                &\|w(2tz)+(2t)^{-\frac{d}{2}}\gamma(t, z)\|_{L^2}^{\frac{d-1}{d}} \lesssim t^{-\frac{d-1}{2}}\|u\|_{L^2}^{\frac{d-1}{d}}, \\
                &\|w(2tz) + (2t)^{-\frac{d}{2}}\gamma(t, z)\|_{L^\infty}^{\frac{1}{d}} \lesssim \|u\|_{L^\infty}^{\frac{1}{d}}.
            \end{align}
            For the terms in \eqref{E:minusterm}, we crudely estimate the difference in $L^2$ by the sum, and apply the analysis from \eqref{E:plusterm}. For the $L^\infty$ term, though, we use the physical space estimate from \Cref{L:gammabounds}. This nets us the pair of estimates 
            \begin{align}
                &\|w(2tz) - (2t)^{-\frac{d}{2}}\gamma(t, z)\|_{L^2}^{\frac{d-1}{d}} \lesssim t^{-\frac{d-1}{2}}\|u\|_{L^2}^{\frac{d-1}{d}}, \\
                &\|w(2tz) - (2t)^{-\frac{d}{2}}\gamma(t, z)\|_{L^\infty}^{\frac{1}{d}} \lesssim t^{-\frac{1}{d}(\frac{\beta}{2}+\frac{d}{4})}\| |J|^\beta u\|_{L^2}^{\frac{1}{d}}.
            \end{align}
            Note that the physical space estimate can be reproven with an extra factor of $2$; all that changes is that in \eqref{E:differencerepn} we have a $2^{d/2}$ that we can freely replace with $2^d$, using the fact that $d > 0$.
            
            Putting all this information together, we establish the final estimate 
            \begin{equation}
                    \|\cR_3\|_{L^\infty} \lesssim t^{\frac{d}{2}-\frac{1}{d}(\frac{\beta}{2}+\frac{d}{4})}\|u\|_{L^2}^{\frac{2(d-1)}{d}} \|u\|_{L^\infty}^{1+\frac{1}{d}}\| |J|^\beta u\|_{L^2}^{\frac{1}{d}},
            \end{equation}
            which is acceptable. 
        \end{proof} 

        Having written down the approximate ODE for the wavepacket $\gamma(t, v)$, we would like to establish the bounds \eqref{E:sharpdecay} and \eqref{E:energygrowth} claimed as part of \Cref{T:maintheorem}. This will follow from the following 
        \begin{lemma}\label{L:bootstraparg}
           Let $0 < \eps \ll 1$ and suppose that  $\|u_0\|_{H^{0, \beta}} = \eps$. Let $u(t, x)$ be the corresponding global solution to \eqref{E:VNLS} given by \Cref{T:GWP}. Then the bounds \eqref{E:sharpdecay} and \eqref{E:energygrowth} hold globally in time.
        \end{lemma}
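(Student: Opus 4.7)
The plan is a standard continuity/bootstrap argument. I would postulate, for $t \in [1, T^*]$, the two hypotheses
\[
    \|u(t)\|_{L^\infty} \leq 2\eps t^{-d/2}, \qquad \||J|^\beta u(t)\|_{L^2} \leq 2\eps \jbrak{t}^{C_*\eps^{3-1/d}},
\]
where $C_*$ is a large absolute constant to be chosen. The initial window $t \in [0,1]$ is absorbed into constants by the local theory of \Cref{T:GWP}, and $t<0$ follows by the time-reversal symmetry of \eqref{E:VNLS}. Both bounds must be closed with a strictly smaller right-hand side for $\eps$ small, after which the standard continuity argument yields $T^* = \infty$.

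To close the $L^\infty$ bound, the crucial observation is that the leading term in the ODE \eqref{E:gammaODE} is purely imaginary times $\gamma$, so $\partial_t|\gamma|^2 = 2\operatorname{Re}(\bbar\gamma\,\cR)$ and hence $\bigl|\partial_t|\gamma(t,v)|\bigr| \leq |\cR(t,v)|$. Plugging the bootstrap hypotheses into \eqref{E:remainderestimate}, each of the three remainder terms becomes $\lesssim \eps^3 t^{-a}$ for some $a>1$: after substituting $\|u\|_{L^\infty}\lesssim \eps t^{-d/2}$, $\|u\|_{L^2}=\eps$, and $\||J|^\beta u\|_{L^2}\lesssim \eps\jbrak{t}^{C_*\eps^{3-1/d}}$, one checks directly that the strict inequality $\beta > d/2$ forces the three time exponents to be strictly below $-1$, while the extra $\jbrak{t}^{(1/d)C_*\eps^{3-1/d}}$ factor is harmless for $\eps$ small. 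Integrating yields $\|\gamma(t)\|_{L^\infty} \leq \|\gamma(1)\|_{L^\infty} + O(\eps^3)$, and since the local theory of \Cref{T:GWP} combined with the trivial bound from \Cref{L:gammabounds} gives $\|\gamma(1)\|_{L^\infty}\lesssim \eps$, the physical-space comparison in \Cref{L:gammabounds} produces
\[
    \|u(t)\|_{L^\infty} \leq t^{-d/2}\|\gamma(t)\|_{L^\infty} + t^{-\beta/2-d/4}\||J|^\beta u\|_{L^2} \leq (1+O(\eps^2))\,\eps\,t^{-d/2},
\]
the second term being $o(\eps t^{-d/2})$ because $\beta > d/2$. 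This strictly improves the bootstrap for $\|u\|_{L^\infty}$.

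To close the energy bound I would carry out a weighted $L^2$ energy estimate exploiting the reality of the Hartree potential $V := |\cdot|^{-1}\ast |u|^2$. Differentiating,
\[
    \frac{d}{dt}\||J|^\beta u\|_{L^2}^2 = -2\operatorname{Im}\langle |J|^\beta u,\, |J|^\beta(Vu)\rangle = -2\operatorname{Im}\langle |J|^\beta u,\, [|J|^\beta, V]u\rangle,
\]
since the diagonal contribution $\langle |J|^\beta u, V|J|^\beta u\rangle$ is real and drops out. Using \eqref{E:Jtpower} to conjugate through $\M(t)$ and setting $w = \M(-t)u$, one has $[|J|^\beta,V]u = \M(t)(2it)^\beta [|\nabla|^\beta, V]w$, so the task reduces to proving
\[
    (2t)^\beta\|[|\nabla|^\beta, V] w\|_{L^2} \lesssim \eps^{3-1/d}\, t^{-1}\, \||J|^\beta u\|_{L^2},
\]
which I would establish by a Kato--Ponce-type commutator bound, using \Cref{L:interplemma} to control the Riesz-potential piece $\nabla V = c(\nabla|\cdot|^{-1})\ast|u|^2$ in the same spirit as the estimate of $\cR_3$. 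Cauchy--Schwarz followed by a Gr\"onwall argument then yields $\||J|^\beta u(t)\|_{L^2}\leq \eps\jbrak{t}^{C\eps^{3-1/d}}$ with $C$ independent of $C_*$, so the bootstrap closes once $C_* > C$.

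The main obstacle is precisely the commutator estimate above. A naive multiplier bound $\|V\|_{L^\infty}\lesssim \eps^2 t^{-1}$ would yield only the weaker growth rate $\jbrak{t}^{C\eps^2}$; extracting the commutator gain replaces that quadratic vanishing by a genuinely cubic one, and the Lorentz $L^{2d/(d-1),2}$ interpolation of \Cref{L:interplemma} (which trades one factor of $\|u\|_{L^2}$ for $\|u\|_{L^\infty}^{1/d}$ and so converts the bootstrap decay into extra smallness in $\eps$) is what produces the sharper exponent $3-1/d$ rather than the naive $3$ -- exactly the interpolation mechanism already visible in the factor $\|u\|_{L^\infty}^{1+1/d}\|u\|_{L^2}^{2(d-1)/d}\||J|^\beta u\|_{L^2}^{1/d}$ appearing in the remainder bound \eqref{E:remainderestimate}.
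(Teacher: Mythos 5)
Your $L^{\infty}$ closure matches the paper's: the observation that the leading term of \eqref{E:gammaODE} only rotates $\gamma$, so $\bigl|\partial_t|\gamma|\bigr|\le|\cR|$, is exactly the integrating-factor step the paper performs with $B(t)$, and integrating the remainder bound under the bootstrap hypotheses is the same calculation.

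Your energy closure, however, is a genuinely different argument, and it is where your proposal has a gap. The paper never differentiates $\||J|^\beta u\|_{L^2}^2$; it runs Duhamel from $t=1$, applies $|J|^\beta$, and estimates the nonlinearity via the fractional \emph{product} rule, keeping the diagonal term $\|V\|_{L^\infty}\||\nabla|^\beta w\|_{L^2}$ and feeding it into Gr\"onwall. You propose instead to exploit the reality of $V$ to isolate the commutator $[|J|^\beta,V]$ and to prove a Kato--Ponce-type bound. The idea is natural, but the Kato--Ponce commutator estimate for $\beta>1$ necessarily produces a term in which all $\beta$ derivatives fall on $V$, and the ``Riesz-potential'' pieces you would then want to estimate, $\nabla V$ (kernel $|\cdot|^{-2}$) and $|\nabla|^\beta V$ (kernel $|\cdot|^{-(1+\beta)}$), are too singular in $d=2$: since $\beta>d/2=1$, the kernel $|\cdot|^{-(1+\beta)}$ is not locally integrable, and $|\cdot|^{-2}$ is the failing borderline case for HLS in $\R^2$. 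The only way to avoid putting the extra derivative on the kernel is to write $|\nabla|^\beta V=|\cdot|^{-1}\ast|\nabla|^\beta|w|^2$ and apply a fractional Leibniz rule to $|w|^2$, but then $\||\nabla|^\beta w\|_{L^2}$ reappears on the right and you recover exactly the quadratic coefficient $\eps^2 t^{-1}\||J|^\beta u\|_{L^2}$ you were trying to improve upon; the ``commutator gain'' evaporates. So in $d=2$ the commutator route does not obviously beat the naive estimate, and your proposal as written is incomplete there.

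A secondary point: your explanation of where the exponent $3-\tfrac1d$ comes from is not correct. Under your own bootstrap hypothesis $\|u\|_{L^\infty}\le 2\eps t^{-d/2}$, every Lebesgue norm $\|u\|_{L^r}$ that appears after interpolating between $L^2$ and $L^\infty$ carries exactly one power of $\eps$, so trading $\|u\|_{L^2}$ for $\|u\|_{L^\infty}^{1/d}$ alters the time decay but \emph{not} the $\eps$-power; the $-\tfrac1d$ in the paper actually comes from the paper's weaker bootstrap hypothesis $\|u\|_{L^\infty}\le\eps^{1/2}t^{-d/2}$, under which $\|u\|_{L^\infty}^{2/d}$ contributes only $\eps^{1/d}$. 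Note also that, for closing the bootstrap, any Gr\"onwall coefficient of the form $\eps^\alpha$ with $\alpha>0$ suffices --- the precise exponent only becomes relevant when matching the constant appearing in \eqref{E:energygrowth} --- so the paper's simpler product-rule argument does close the bootstrap even without the commutator.
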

        \begin{proof}
            To prove the lemma, we will run a bootstrap argument under the assumption
            \begin{equation}
            \|u(t)\|_{L^\infty} \leq \eps^{\frac{1}{2}}t^{-\frac{d}{2}},
        \end{equation}
        where $\eps \ll 1$ is the size of the initial datum $u_0$ in the $H^{0, \beta}$ norm. We will prove the bound \eqref{E:energygrowth} first, using Gr\"onwall's inequality, then use the ODE for $\gamma(t, v)$ to prove the bound \eqref{E:sharpdecay}, which will close the bootstrap under the hypothesis $\eps \ll 1$. 
        
        To begin, by the global well-posedness result \cref{T:GWP}, we know that we have 
        \begin{equation}
            \| |J|^\beta u(1)\|_{L^2} \lesssim \| |x|^{\beta}u_0\|_{L^2} < \eps. 
        \end{equation}
        To move past time $t = 1$, we write the Duhamel formula started at $t = 1$: 
        \begin{equation}
            u(t) = e^{i(t-1)\Delta}u_1 -i \int_1^t e^{i(t-s)\Delta} [(|\cdot|^{-1} \ast |u|^2)u] \ds.
        \end{equation}
        Applying $|J|^\beta$ to the equation and estimating in $L^2$, the triangle inequality implies that 
        \begin{equation}
            \| |J|^\beta u\|_{L^2} \lesssim \eps + \int_1^t \| |J|^\beta [(|\cdot|^{-1} \ast |u|^2)u]\|_{L^2} \ds.
        \end{equation}
        By using the identity \eqref{E:jbetaidentity}, we see that it suffices to understand 
        \begin{equation}
            \| |\nabla|^\beta[|\cdot|^{-1} \ast |u|^2]u\|_{L^2}.
        \end{equation}
        In the end, we will replace $u \mapsto w$ and add back the factor $(2it)^\beta$ appearing in \eqref{E:jbetaidentity}; this will give the result for $|J|^\beta u$. 
        Using the fractional product rule, we can control this expression by 
        \begin{equation}
            \| |\cdot|^{-1} \ast |\nabla|^\beta |u|^2\|_{L^p} \|u\|_{L^q} + \| |\cdot|^{-1} \ast |u|^2 \|_{L^\infty} \| |\nabla|^\beta u\|_{L^2},
        \end{equation}
        where $\frac{1}{p}+\frac{1}{q}= \frac{1}{2}$. For the $L^\infty$ term, we use the endpoint Hardy-Littlewood-Sobolev inequality to control the $L^\infty$ norm by 
        \begin{equation}
            \||\cdot|^{-1} \ast |u|^2\|_{L^\infty} \lesssim \|u\|_{L^{\frac{2d}{d-1}, 2}}^2.
        \end{equation}
        Using \Cref{L:interplemma}, we conclude an estimate of the form
        \begin{equation}
            \|u\|_{L^{\frac{2d}{d-1},2}}^2 \lesssim \|u\|_{L^\infty}^{\frac{2}{d}}\|u\|_{L^2}^{\frac{2(d-1)}{d}},
        \end{equation}
        which is acceptable in view of our bootstrap hypothesis. For the other term, we use Hardy-Littlewood-Sobolev and the fractional product rule to estimate 
        \begin{equation}
            \| |\cdot|^{-1} \ast |\nabla|^\beta |u|^2\|_{L^p} \lesssim \| |\nabla|^\beta |u|^2\|_{L^{\tilde{p}}} \lesssim \||\nabla|^\beta u\|_{L^2} \|u\|_{L^r},
        \end{equation}
        where we have the conditions 
        \begin{equation}
            \frac{1}{p}+ \frac{d-1}{d} = \frac{1}{\tilde{p}} \qtq{and} \frac{1}{2}+ \frac{1}{r}=\frac{1}{\tilde{p}}.
        \end{equation}
        In light of the conditions, we see that both $r$ and $q$ are larger than $2$; we can thus interpolate again between $L^2$ and $L^\infty$ and use the conditions on $p, q, r$ to deduce
        \begin{equation}
            \| |\cdot|^{-1} \ast |\nabla|^\beta u\|_{L^p} \|u\|_{L^q} \lesssim \| |\nabla|^\beta u\|_{L^2} \|u\|_{L^2}^{\frac{2(d-1)}{d}}\|u\|_{L^\infty}^\frac{2}{d},
        \end{equation}
        which is acceptable. Putting this all together, we find that 
        \begin{equation}
            \| |J|^\beta u(t)\|_{L^2} \lesssim 2\eps + \tilde{C}\eps^2\int_1^t \frac{1}{s} \| |J|^\beta u(s)\|_{L^2} \ds,
        \end{equation}
        for some constant $\tilde{C} \ll C$. Hence by the Gr\"onwall inequality
        \begin{equation}
            \| |J|^\beta u\|_{L^2} \lesssim 2\eps\jbrak{t}^{\eps^{3-\frac{1}{d}}}.
        \end{equation}
        By conservation of mass, this further implies that 
        \begin{equation}
            \| \jbrak{J}^\beta u\|_{L^2} \lesssim 2\eps\jbrak{t}^{\eps^{3-\frac{1}{d}}}.
        \end{equation}
        Next, we want to close the bootstrap and establish the required pointwise decay bound. Using Lemma \ref{L:gammabounds}, we know that 
        \begin{equation}
            \|e^{-i\frac{|x|^2}{4t}}u(t, x) - t^{-\frac{d}{2}}\gamma\|_{L_x^\infty} \lesssim t^{-\frac{\beta}{2}-\frac{d}{4}} \| |J|^\beta u\|_{L_x^2}.
        \end{equation}
        In light of the estimate on $\| |J|^\beta u\|_{L^2}$, we know that 
        \begin{equation}
            \| e^{-i\frac{|x|^2}{4t}}u - t^{-\frac{d}{2}}\gamma\|_{L^\infty} \lesssim 2\eps\jbrak{t}^{-\frac{\beta}{2}-\frac{d}{4}+\eps^{3-\frac{1}{d}}}.
        \end{equation}
        It thus remains to estimate $\gamma$; these estimates will help establish the bound \eqref{E:sharpdecay}. First, by the local theory and \Cref{L:gammabounds}, we have 
        \begin{equation}
            |\gamma(1, v)|\lesssim 2\eps.
        \end{equation}
        To close the argument, we introduce the integrating factor 
        \begin{equation}
            B(t) = \exp\left(-\int_{1}^{t}\frac{i}{2s}|\cdot|^{-1}\ast |\gamma(s, v)|^2 \ds\right).
        \end{equation}
        Noting that $B$ has modulus $1$ and multiplying through in \eqref{E:gammaODE} by $B$, we see that in modulus 
        \begin{equation}
            |\gamma(t, v)| \lesssim |\gamma(1, v)| + \int_1^t |\cR(s, v) |\ds. 
        \end{equation}
        Under the bootstrap hypotheses, we know that $\cR$ is integrable; indeed, we have a bound on $\cR$ of the form
        \begin{equation}
            |\cR(s, v)|\lesssim \eps(1+\eps^{2-\frac{1}{d}}) s^{-1-\delta + \eps^{3-\frac{1}{d}}},
        \end{equation}
        where $\delta$ = $\beta - \frac{d}{2}$ is positive; for example, $\delta = \eps$ suffices. Integrating, then, we have a bound on $|\gamma(t, v)|$ by
        \begin{equation}
            |\gamma(t, v)|\lesssim \eps(1+\eps^{2-\frac{1}{d}}).
        \end{equation}
        Thus we have 
        \begin{equation}
            \|u\|_{L^\infty} \lesssim 2\eps |t|^{-\frac{d}{2}},
        \end{equation}
        under the constraint that $\eps \ll 1$, which completes the argument. 
        \end{proof}
        
        \section{Asymptotic Behavior of Solutions}\label{S:asymptotics}
        In this section, we will write down an explicit asymptotic expansion for solutions to \eqref{E:VNLS}. Recall earlier that we defined the quantity  
        \begin{equation}
            B(t) = \exp\left(-\int_1^t \frac{i}{2s}|\cdot|^{-1}\ast |\gamma(s, v)|^2 \ds\right).
        \end{equation}
        as an integrating factor for \eqref{E:gammaODE}. We thus set 
        \begin{equation}
            G(t) = B(t)\gamma(t, v).            
        \end{equation}
        Using the bootstrap argument from earlier, we have the estimate 
        \begin{equation}
            \|\partial_t G\|_{L^\infty} \lesssim \eps^3 t^{-1-\eps+\eps^{3-\frac{1}{d}}}.
        \end{equation}
        Using the fundamental theorem of calculus and completeness of $L^\infty$, we find that there exists $\mathscr{W}_0 \in L^\infty$ such that 
        \begin{equation}
            \|G(t) - \mathscr{W}_0 \|_{L^\infty}\to 0  \qtq{as} t \to \infty.
        \end{equation}
        In particular, since $B$ has modulus $1$, we know that $|\gamma(t, v)| \to |\sW_0|(v)$ in $L^\infty$. In particular, we know that 
        \begin{equation}
            B(t) = \exp\left(-\frac{i}{2}(|\cdot|^{-1} \ast |\sW_0|^2(v))\log(t)-i\Phi(t)\right),
        \end{equation}
        where $\Phi(t)$ has a limit $\Phi_\infty$ in $L^\infty$.In particular, if we set $\sW = e^{-i\Phi_\infty} \sW_0$, we find the expansion 
        \begin{equation}
            \gamma(t, v) = e^{-\frac{i}{2}(|\cdot|^{-1} \ast |\sW(v)|^2)\log(t)}\sW(v) + \mathcal{O}(t^{-\eps})
        \end{equation}
        for any $\eps > 0$. This completes the argument, as the asymptotic expansion for $u$ then immediately follows from \eqref{L:gammabounds}, using the fact that $\||J|^\beta u\|_{L^2}$ grows like $2\eps\jbrak{t}^{\eps^{3-\frac{1}{d}}}$. 
        \appendix
        \section{Interpolation and Lorentz Spaces} \label{AS:InterpAppendix}
            In this appendix, we will give the details of the claim in \Cref{L:interplemma}. To begin, \cite[Theorem 5.3.1]{berghInterpolationSpacesIntroduction1976} is as follows:
            \begin{theorem*}
                Suppose that $0 < p_0, p_1, q_0, q_1 \leq \infty$ and write 
                \begin{equation*}
                    \frac{1}{p} = \frac{1-\theta}{p_0} + \frac{\theta}{p_1} \qtq{where} 0 < \theta < 1.
                \end{equation*}
                Then if $p_0 \neq p_1$, we have 
                \begin{equation*}
                    [L^{p_0, q_0}, L^{p_1, q_1}]_{\theta, q} = L^{p, q}.
                \end{equation*}
                If $p_0 = p_1 = p$, this result holds provided that 
                \begin{equation*}
                    \frac{1}{q} = \frac{1-\theta}{q_0} + \frac{\theta}{q_1}.
                \end{equation*}
            \end{theorem*}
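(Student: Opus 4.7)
The statement is Theorem 5.3.1 of \cite{berghInterpolationSpacesIntroduction1976}, and the natural plan is to reproduce the standard argument via Peetre's real interpolation ($K$-method). Recall that for a compatible Banach couple $(A_0, A_1)$ one defines
\[
    K(t, f; A_0, A_1) := \inf\{ \|f_0\|_{A_0} + t \|f_1\|_{A_1} : f = f_0 + f_1\},
\]
and $\|f\|_{\theta, q} := \|t^{-\theta} K(t, f)\|_{L^q((0,\infty), dt/t)}$. The whole proof reduces to computing this $K$-functional for the couple $(L^{p_0, q_0}, L^{p_1, q_1})$ and comparing with the Lorentz norm.

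The first step I would carry out is the classical identification
\[
    K(t, f; L^{p_0}, L^{p_1}) \sim \left( \int_0^{t^{\alpha}} f^*(s)^{p_0} \, ds \right)^{1/p_0} + t \left( \int_{t^{\alpha}}^\infty f^*(s)^{p_1} \, ds\right)^{1/p_1}
\]
for a suitable exponent $\alpha = \alpha(p_0, p_1)$, obtained by splitting $f$ at a level set chosen to balance the two terms; here $f^*$ is the decreasing rearrangement. This is done by constructing an explicit near-optimal decomposition and matching it with a lower bound coming from Hardy--Littlewood rearrangement inequalities. With $q_0, q_1$ general one inserts a parallel argument working with the functional $t^{1/p_j} f^{**}(t)$ rather than $f^*$, so that the $L^{p_j, q_j}$ norms appear naturally after change of variables $s = t^\alpha$.

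Next, I would substitute this expression into the definition of $\|\cdot\|_{\theta, q}$ and reduce the $p_0 \neq p_1$ case to showing
\[
    \left\| t^{-\theta}\!\left[ \big( \textstyle\int_0^{t^\alpha} \cdots \big)^{1/p_0} + t \big( \textstyle\int_{t^\alpha}^\infty \cdots \big)^{1/p_1}\right]\right\|_{L^q(dt/t)} \sim \| s^{1/p} f^*(s)\|_{L^q(ds/s)},
\]
which is the $L^{p,q}$ norm. This identification follows from Hardy's inequality applied to each of the two integrals separately, using crucially that $\theta/p_1 - (1-\theta)/p_0 \neq 0$ precisely when $p_0 \neq p_1$, so that the relevant weights are integrable on the appropriate half-lines. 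The degenerate case $p_0 = p_1 = p$ is handled by a more delicate argument in which the two pieces no longer decouple and one must invoke the condition $1/q = (1-\theta)/q_0 + \theta/q_1$ to obtain the correct summability; here Holmstedt's formula replaces the naive computation.

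The main obstacle is of course computing the $K$-functional with both indices $p_j$ and $q_j$ present, since the decomposition optimizing the $L^{p_j, q_j}$ norms is not as transparent as in the pure $L^{p_j}$ case and requires the $f^{**}$ reformulation together with a careful use of the Calderon operator. Once that technical ingredient is in place, however, the remainder is book-keeping in weighted $L^q(dt/t)$ spaces, and the second conclusion of \Cref{L:interplemma} (the interpolation inequality $\|u\|_{L^{p,q}} \lesssim \|u\|_{L^{p_0}}^{1-\theta} \|u\|_{L^{p_1}}^\theta$) then drops out immediately by applying the trivial estimate $K(t, f; A_0, A_1) \leq \min(\|f\|_{A_0}, t\|f\|_{A_1})$ to the integral defining $\|f\|_{\theta, q}$ and balancing the two bounds.
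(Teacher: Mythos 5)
The paper itself does not prove this statement: it is quoted verbatim as Theorem 5.3.1 of Bergh--L\"ofstr\"om, cited as a black box. So there is no in-paper argument to compare against, and the relevant question is simply whether your sketch would in fact yield the theorem.

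Your framework---the $K$-method, Holmstedt-type formulas, Hardy's inequality, and the degenerate case controlled by the extra condition on $q$---is the right one, and your closing observation that the interpolation inequality in \Cref{L:interplemma} follows from $K(t,f)\leq\min(\|f\|_{A_0},\,t\|f\|_{A_1})$ after splitting at $t=\|f\|_{A_0}/\|f\|_{A_1}$ is correct. However, the core of your plan, a direct computation of $K(t,f;L^{p_0,q_0},L^{p_1,q_1})$ for general $q_0,q_1$, is where the argument as written is too thin. You display the Holmstedt formula for the Lebesgue couple $(L^{p_0},L^{p_1})$ and then say one ``inserts a parallel argument working with the functional $t^{1/p_j}f^{**}(t)$''; this is not a routine replacement and does not by itself produce a usable expression for the $K$-functional of the Lorentz couple. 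The standard way to avoid this difficulty---and the route the cited source actually takes---is \emph{reiteration}: first establish the special case $(L^{p_0},L^{p_1})_{\theta,q}=L^{p,q}$ using the exact formula $K(t,f;L^1,L^\infty)=\int_0^t f^*(s)\,ds$ together with the power theorem and Hardy's inequality, so that each $L^{p_j,q_j}$ is recognized as $(L^1,L^\infty)_{1-1/p_j,\,q_j}$; then apply the reiteration theorem to the couple $\big((L^1,L^\infty)_{1-1/p_0,\,q_0},(L^1,L^\infty)_{1-1/p_1,\,q_1}\big)$. The hypothesis $p_0\neq p_1$ is exactly the nondegeneracy condition $1-1/p_0\neq 1-1/p_1$ that reiteration requires, and the secondary indices $q_0,q_1$ simply drop out, which is why there is no constraint on $q$ in this case. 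The degenerate case $p_0=p_1$ sits outside the scope of reiteration and needs a separate $K$-functional computation at a fixed primary index, where the condition $1/q=(1-\theta)/q_0+\theta/q_1$ arises naturally; your appeal to Holmstedt's formula there is appropriate, but again you would want to work relative to the base couple $(L^1,L^\infty)$ rather than try to compute the Lorentz $K$-functional from scratch. In short: the ingredients you list are the right ones, but without reiteration as the organizing step, the passage from the Lebesgue case to the full Lorentz statement is a genuine gap rather than book-keeping.
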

            Applying this theorem with $(p_0, q_0) = (2, 2)$, $(p_1, q_1) = (\infty, \infty)$ and $(p, q) = \left( \frac{2d}{d-1}, 2\right)$ yields the first claim of \Cref{L:interplemma}, as the resulting value for $\theta$ is $\theta = \frac{1}{d} \in (0, 1)$, so that the above Theorem applies. 

            To conclude the remainder of \Cref{L:interplemma} involving interpolation of norms, we note that by definition the space $L^{p, q}$ in the Theorem above is defined by real interpolation. Using \cite[Theorem 3.5.1]{berghInterpolationSpacesIntroduction1976}, we see that $L^{\frac{2d}{d-1}, 2}$ (in the notation of \cite{berghInterpolationSpacesIntroduction1976}) is a space of class $\mathscr{C}\left(\frac{1}{d}, (L^2, L^\infty)\right)$. Using \cite[Item (b), p.49]{berghInterpolationSpacesIntroduction1976} and the fact that being class $\mathscr{C}\left(\frac{1}{d}, (L^2, L^\infty)\right)$ implies being class $\mathscr{C}_J\left(\frac{1}{d}, (L^2, L^\infty)\right)$, we conclude that 
            \begin{equation*}
                \|u\|_{L^{\frac{2d}{d-1}, 2}} \lesssim \|u\|_{L^2}^{\frac{d-1}{d}}\|u\|_{L^\infty}^{\frac{1}{d}},
            \end{equation*}
            which is what we claimed.
        \nocite{*}
    \bibliography{testing-by-wavepackets}
    \bibliographystyle{bjoern_style}
\end{document}